\newtheorem{theorem}{Theorem}[section]
\newtheorem{corollary}[theorem]{Corollary}
\newtheorem{lemma}[theorem]{Lemma}
\newtheorem{proposition}[theorem]{Proposition}
\newtheorem{problem}[theorem]{Problem}
\newtheorem{mainthm}[theorem]{Main Theorem}
\theoremstyle{definition}
\newtheorem{definition}[theorem]{Definition}
\newtheorem{remark}[theorem]{Remark}
\newtheorem*{xrem}{Remark}
\numberwithin{equation}{section}
\DeclareMathOperator{\len}{length}
\newcommand{\obj}[3]{\mathcal{F}^{#1}\mathbb{S}^{#2}\mathbf{G}_{#3}}
\newcommand{\R}{\mathbb R}
\newcommand{\N}{\mathbb N}
\newcommand{\subR}{{\mathbb R}}
\newcommand{\subRn}{{{\mathbb R}^n}}
\DeclareMathOperator{\supp}{supp}
\DeclareMathOperator*{\esssup}{ess\,sup}
\DeclareMathOperator{\osc}{\large{osc}}
\newcommand{\pp}{{p(\cdot)}}
\newcommand{\Lp}{L^{p(\cdot)}}
\newcommand{\Pp}{\mathcal P}
\newcommand{\D}{\mathcal{D}}
\newcommand{\F}{\mathcal{F}}
\newcommand{\B}{\mathcal{B}}
\DeclareMathAlphabet\EuRoman{U}{eur}{m}{n}
\SetMathAlphabet\EuRoman{bold}{U}{eur}{b}{n}
\renewcommand{\mathsf}{\EuRoman}
\def\Xint#1{\mathchoice 
	{\XXint\displaystyle\textstyle{#1}}%
	{\XXint\textstyle\scriptstyle{#1}}%
	{\XXint\scriptstyle\scriptscriptstyle{#1}}%
	{\XXint\scriptscriptstyle\scriptscriptstyle{#1}}%
	\!\int} 
\def\XXint#1#2#3{{\setbox0=\hbox{$#1{#2#3}{\int}$} 
		\vcenter{\hbox{$#2#3$}}\kern-.5\wd0}}
\def\avgint{\Xint-}
\begin{document}


\baselineskip=17pt


\title[Sobolev meets Riesz]{Sobolev meets Riesz: a characterization of weighted Sobolev spaces via weighted Riesz bounded variation spaces}

\author[D. Cruz-Uribe, OFS]{David Cruz-Uribe, OFS}
\address[D. Cruz-Uribe,OFS]{Departament of Mathematics\\ University of Alabama \\ Tuscaloosa, USA}
\email{dcruzuribe@ua.edu}

\author[O. Guzm\'an]{Oscar Guzm\'an}
\address[O. Guzm\'an]{Departament of Sciences and Humanities\\ University of America \\ Bogotá, Colombia and Department of Mathematics \\ United Arab Emirates University \\ Al Ain, United Arab Emirates}
\email{oscar.guzman@profesores.uamerica.edu.co}

\author[H. Rafeiro]{Humberto Rafeiro}
\address[H. Rafeiro]{Department of Mathematics \\ United Arab Emirates University \\ Al Ain, United Arab Emirates}
\email{rafeiro@uaeu.ac.ae}

\date{}

\begin{abstract}
  We introduce weighted Riesz bounded variation spaces
  defined on an open subset of the $n$-dimensional Euclidean space 
  and use them to characterize weighted Sobolev spaces when the weight
  belongs to the Muckenhoupt class.  As an application, using Rubio de
  Francia's extrapolation theory, a
  similar characterization of the variable exponent Sobolev spaces via
  variable exponent Riesz bounded variation spaces is obtained.
\end{abstract}

\subjclass[2010]{42B25,42B35}

\keywords{Riesz bounded variation spaces, Muckenhoupt $A_p$ weights, Sobolev spaces,  Rubio de Francia extrapolation} 
\maketitle

	\section{Introduction}
	\label{sec:intro}
	In the foundational paper \cite{Riesz1910}, F. Riesz proved
        (in modern terminology) that 
        an absolutely continuous
        function $f: I \to \subR$ 
        belongs 
	to the Sobolev space $W^{1,p}(I)$, $1 < p < \infty$ and $I \subset \subR$ an interval, if and only if
	\begin{equation}\label{eq:RieszVardefinition}
		\sup\sum_{j}\frac{|f(x_{j})-f(x_{j-1})|^{p}}{|x_{j}-x_{j-1}|^{p-1}}<\infty, 
	\end{equation}
	where the supremun is taken over all finite
        partitions 
	of $I$. The quantity in \eqref{eq:RieszVardefinition} is
        called the \emph{Riesz $p$-variation of $f$ on $I$}. We
        refer the interested readers to \cite{Appell-Merentes} for a
        comprehensive survey of the classical theory of bounded variation spaces.
	There has been progress  on
        generalizations of  Riesz bounded variation spaces, 
        for example, 
     weighted Riesz spaces~\cite{MR4154132}, and 
 variable exponent Riesz spaces \cite{castillo2016variable,Castillo2019,
   Kakochashvili_OnRiesz_2016}, to name just two.

 More recently, work has been done on extending the notion of Riesz bounded
 variation spaces to the case of  functions defined
 on general domains in $\R^n$, $n>1$, see, for instance, the
 works of Angeloni~\cite{Angeloni2017}, Barza and
 Lind~\cite{Barza-Lind2015}, 
 Bojarski~\cite{bojarski2011remarks}, and Mal\'y~\cite{Maly1999}.  We
 are particularly interested in \cite{Barza-Lind2015} since the space  
 $RBV^p(\Omega)$, $\Omega \subset \mathbb R^n$, is introduced and is defined as the set of all functions $f$ such that
 \[ V_p(f; \Omega) = \sup \bigg[ \sum_{B_k\in \D} \bigg(
   \frac{\osc_{B_k}(f)}{r_k}\bigg)^p |B_k| \bigg]^{1/p} < \infty, \]
 where the supremum is taken over all countable
              collections $\D=\{B_k\}_{k=1}^\infty$ of disjoint balls of radius
              $r_{k}$ contained in $\Omega$.  It is shown that if
              $p>n$, then $f\in
              RBV^p(\Omega)$ if and only if  $f \in W^{1,p}(\Omega)$. 

              The goal of this paper is to extend this result and
              prove a version of the Riesz theorem characterizing
              weighted Sobolev spaces in $\R^n$,
              $W^{1,p}\left(\Omega, w\right)$, for weights $w$ in the
              Muckenhoupt class $A_p$, using an appropriate weighted version of $RBV^p$.  To state our main results, we give some
              essential definitions; we defer technical details and
              some additional definitions until the next section.
              Hereafter, $\Omega \subset \R^n$ will be an open set.
              Given a weight $w\in A_\infty$, 
              let
              \begin{equation}\label{eq:rw}
                r_w := \inf\{q>1 : w\in A_q\}.
\end{equation}
                For
              $1\leq p<\infty$, let $W^{1,p}\left(\Omega, w\right)$
              denote the collection of functions
              $f\in L^{p}(\Omega,w) $ whose weak derivatives $D_{j}f$
              belong to $L^{p}\left(\Omega,w\right)$,
              $1\leqslant p <\infty$.  The space $W^{1,p}(\Omega,w)$
              is the collection of functions $f$ with weak derivatives
              endowed with the norm
	\begin{equation}\label{def:Sobolev}
          \|f\|_{W^{1,p}(\Omega,w)}
          =  \left(\int_{\Omega}| f(x)|^{p}w(x)\dif{x}\right)^{1/p}
          +\left(\int_{\Omega}|\nabla f(x)|^{p}w(x)\dif{x}\right)^{1/p}.
              \end{equation}
Weighted Sobolev spaces were introduced in~\cite{MR643158} and have
applications to the study of degenerate PDEs.  
              
              Given a measurable function $f : \Omega \rightarrow\R$ and $1\leq
              p<\infty$, we say that
              $f$ is of weighted Riesz bounded $p$-variation on
              $\Omega$, denoted by $f\in RBV^p(\Omega,w)$, if
              \[ 	V_{p}\left(f;\Omega, w\right)
                \coloneqq
                \sup \bigg[\sum_{B_k \in
                    \D}\left(\frac{\osc_{B_k}(f)}{r_{k}}\right)^{p}w(B_k)\bigg]^{1/p}
                <\infty. \]
             
Our first main result, whose proof we postpone to Section \ref{sec:proof main theorem}, is the following. 
	\begin{theorem}\label{theo:RBVpq-W1p}
          Given $\Omega\subseteq \subRn$ an open set, assume that
          $p>n$. Let $w\in A_p$ be such that $p>n r_w$, with $r_w$ defined in \eqref{eq:rw}. Then
          $f\in W^{1,p}\left(\Omega,w\right)$ if and only if the function $f$ is
          continuous (perhaps after being redefined on a null set) 
          and $f\in RBV^{p}\left(\Omega,w\right)\cap L^p(\Omega,w)$. Furthermore,
		\begin{equation}\label{eq:Main_Inequality}
                  \|\nabla f\|_{L^{p}\left(\Omega,w\right)}
                  \lesssim  V_{p}\left(f;\Omega, w\right)
                  \lesssim \|\nabla f\|_{L^{p}\left(\Omega,w\right)},
		\end{equation}
		where the implicit constants depend on $p, n$, and $[w]_{A_{p}}$. 	
              \end{theorem}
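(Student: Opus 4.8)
The plan is to establish the two implications separately. The main ingredients are the weighted $L^p$‑Poincar\'e inequality for $A_p$ weights, the elementary measure comparison for $A_q$ weights (namely $w(E)/w(B)\gtrsim(|E|/|B|)^q$ for $E\subseteq B$ when $w\in A_q$), the doubling property of $w$, and the difference‑quotient description of Sobolev functions; the implicit constants will carry the dependence claimed in the statement once an auxiliary exponent is fixed.

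Suppose first that $f\in W^{1,p}(\Omega,w)$. The crux is a weighted Morrey‑type oscillation bound: for every ball $B=B(x_0,r)$ with $B\subseteq\Omega$,
\[
\osc_B(f)\lesssim r\left(\frac{1}{w(B)}\int_B|\nabla f|^pw\,\dif x\right)^{1/p}.
\]
Since $p>nr_w$, fix $q$ with $r_w<q<p/n$, so that $w\in A_q$ and $nq/p<1$. With $f_{B,w}=\tfrac1{w(B)}\int_B fw\,\dif x$ and $x\in B$ a Lebesgue point of $f$ with respect to $w\,\dif x$, I would telescope along the concentric balls $B_i=B(x,2^{-i}r)$, using $|f(x)-f_{B,w}|\le\sum_{i\ge0}|f_{B_{i+1},w}-f_{B_i,w}|$. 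Each increment is estimated by the weighted Poincar\'e inequality on $B_i$ together with the doubling of $w$, giving a bound by $2^{-i}r\big(\tfrac1{w(B_i)}\int_{B_i}|\nabla f|^pw\,\dif x\big)^{1/p}$; replacing the average over $B_i$ by the average over $B$ costs a factor $(w(B)/w(B_i))^{1/p}\lesssim 2^{inq/p}$ by the $A_q$ comparison. Because $nq/p<1$, the series $\sum_i 2^{-i(1-nq/p)}$ converges, which yields the oscillation bound and, by the same telescoping applied to pairs of points, a continuous (indeed locally H\"older) representative of $f$. Raising the bound to the $p$‑th power, multiplying by $w(B_k)$, and summing over any disjoint family $\D=\{B_k\}$,
\[
\sum_{B_k\in\D}\left(\frac{\osc_{B_k}(f)}{r_k}\right)^pw(B_k)\lesssim\sum_{k}\int_{B_k}|\nabla f|^pw\,\dif x\le\|\nabla f\|_{L^p(\Omega,w)}^p,
\]
so $V_p(f;\Omega,w)\lesssim\|\nabla f\|_{L^p(\Omega,w)}$; together with $f\in L^p(\Omega,w)$ this settles the forward implication and the left inequality in \eqref{eq:Main_Inequality}.

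Conversely, assume $f$ is continuous and $f\in RBV^p(\Omega,w)\cap L^p(\Omega,w)$, and fix $\Omega'\Subset\Omega$. For $|h|$ small I would cover $\Omega'$ by a grid of disjoint cubes $\{Q_j\}$ of side $|h|$ centered at $c_j$; if $\lambda_0=1+\tfrac{\sqrt n}2$ and $B_j=B(c_j,\lambda_0|h|)$, then $Q_j\subseteq B_j$ and $x,x+h\in B_j$ for every $x\in Q_j$, so $|f(x+h)-f(x)|\le\osc_{B_j}(f)$ and
\[
\int_{\Omega'}|f(x+h)-f(x)|^pw(x)\,\dif x\le\sum_j\osc_{B_j}(f)^pw(B_j)=(\lambda_0|h|)^p\sum_j\left(\frac{\osc_{B_j}(f)}{\lambda_0|h|}\right)^pw(B_j).
\]
The balls $\{B_j\}$ have overlap bounded by some $N=N(n)$, hence split into $N$ disjoint subfamilies, and (for $|h|$ small enough that each $B_j\subseteq\Omega$) the last sum is at most $N\,V_p(f;\Omega,w)^p$. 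Thus $\big\||h|^{-1}\big(f(\cdot+h)-f\big)\big\|_{L^p(\Omega',w)}\lesssim V_p(f;\Omega,w)$ uniformly in small $h$. Since $1<p<\infty$ and $w\in A_p$, the space $L^p(\Omega',w)$ is reflexive and $f\in L^1_{\mathrm{loc}}(\Omega)$ (because $w^{-1/(p-1)}\in L^1_{\mathrm{loc}}$), so a standard weak‑compactness argument along $h=h_ke_j\to0$ identifies a weak derivative $D_jf\in L^p(\Omega',w)$ with $\|D_jf\|_{L^p(\Omega',w)}\lesssim V_p(f;\Omega,w)$. Letting $\Omega'\uparrow\Omega$ and using $f\in L^p(\Omega,w)$ gives $f\in W^{1,p}(\Omega,w)$ and the right inequality in \eqref{eq:Main_Inequality}.

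The main obstacle is the forward direction, and precisely the interlocking of the Poincar\'e inequality, doubling, and the $A_q$ volume comparison so that the decisive exponent $1-nq/p$ in the telescoping chain is strictly positive: this is exactly where the hypothesis $p>nr_w$ (equivalently, the existence of an admissible $q\in(r_w,p/n)$) is used, and it cannot be relaxed. The converse is more routine, the only delicate points being the geometric bookkeeping in the covering and the validity of the difference‑quotient characterization of Sobolev spaces in the $A_p$‑weighted setting, which rests on reflexivity of $L^p(\Omega',w)$ and the local integrability of $f$.
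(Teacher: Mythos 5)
Your forward direction contains a genuine gap, located in the proof of the key oscillation lemma. The chain of concentric balls $B_i=B(x,2^{-i}r)$, $x\in B=B(x_0,r)$, is not contained in $B$ but only in $2B$ (note also that your telescoping sum converges to $f(x)-f_{B(x,r),w}$, not to $f(x)-f_{B,w}$, so you additionally need to compare the averages over $B(x,r)$ and over $B$, which again takes you into $2B$). Hence the Poincar\'e inequalities and the $A_q$ measure comparison are being applied to balls that need not lie in $\Omega$, and what the argument actually yields is $\osc_B(f)\lesssim r\,\bigl(w(B)^{-1}\int_{2B}|\nabla f|^pw\,\dif x\bigr)^{1/p}$, valid only when $2B\subset\Omega$. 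This is fatal for the next step: for an arbitrary disjoint family $\{B_k\}\subset\Omega$ the doubles $2B_k$ need not be contained in $\Omega$ and, worse, need not have bounded overlap (disjoint balls can accumulate at a point that belongs to all of their doubles), so $\sum_k\int_{2B_k}|\nabla f|^pw\,\dif x$ is not controlled by $\|\nabla f\|_{L^p(\Omega,w)}^p$. You need the oscillation estimate to be \emph{intrinsic} to $B$. The paper gets exactly this by exploiting convexity: for $y,z\in B$ it writes $f(z)-\langle f\rangle_{B}$ through the segment representation (all points $tz+(1-t)y$ stay in $B$), applies H\"older against the weight with an auxiliary exponent $q$ satisfying $nr_w<nq<p$, and then uses the $A_q$ identity $w(B)\sigma(B)^{q-1}\lesssim r^{nq}$, $\sigma=w^{1/(1-q)}$, which gives precisely your claimed bound with the integral over $B$ alone. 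Alternatively, your chaining can be repaired by chaining \emph{inside} $B$ (balls centered on the segment from $x$ to $x_0$ with radii comparable to the distance to $\partial B$, as in the Boman/John-domain argument). With either fix, the rest of your forward argument --- the choice $r_w<q<p/n$, the convergence of $\sum_i 2^{-i(1-nq/p)}$, the continuity of a representative, and the summation over disjoint balls --- goes through, and it uses the hypothesis $p>nr_w$ in the same way the paper does.

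Your converse is correct and takes a genuinely different route from the paper: you bound weighted $L^p$ norms of difference quotients via a lattice covering with uniformly sized enclosing balls of bounded overlap, then use reflexivity of $L^p(\Omega',w)$ together with $w^{1-p'}\in L^1_{loc}$ (so that $f\in L^1_{loc}$ and test functions lie in the dual) to extract the weak derivatives with the bound $\|\nabla f\|_{L^p(\Omega,w)}\lesssim V_p(f;\Omega,w)$. The paper instead mollifies, bounds $\|D_j(\varphi_R*f)\|_{L^p(\Omega_0,w)}$ by the variation using $\int D_j\varphi\,\dif x=0$ and a similar lattice of cubes and balls, and passes to the limit; it invokes the weighted boundedness of the maximal operator only to control $\|\varphi_R*f\|_{L^p(w)}$, which your argument avoids, so your route shows (as does the paper's Corollary~\ref{cor:RBV-embed}) that this half needs much less than $w\in A_p$. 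One cosmetic slip: the inequality $V_p\lesssim\|\nabla f\|_{L^p(\Omega,w)}$ you prove in the forward direction is the right-hand inequality in \eqref{eq:Main_Inequality}, and the converse gives the left-hand one, not the other way around as your labels suggest.
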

              
	\begin{remark}
          The unweighted version of Theorem~\ref{theo:RBVpq-W1p}
          (i.e., when $w=1$ so $r_w=1$) 
          was
          proved by Barza and Lind in
          \cite{Barza-Lind2015}.   We note in passing an omission in
          the statement of their main result:  the
          hypothesis  that $f\in L^p(\Omega)$ is missing.  It is straightforward
          to give a counter-example without this assumption.  Let
          $\Omega=\R^n$ and let $f\equiv 1$.  Then $f$ is
          continuous and $f \in RBV^p(\R^n)$ for any $p>n$, but
          $f \not\in W^{1,p}(\R^n)$. 
	\end{remark}

        As a corollary to the proof of Theorem~\ref{theo:RBVpq-W1p}, we
        have that the left-hand side  inequality
        in~\eqref{eq:Main_Inequality} is always true.

        \begin{corollary} \label{cor:RBV-embed}
          Let $1\leq p<\infty$ and $w$ be a weight.  If $f\in
          RBV^p(\Omega,w)$, then $\|\nabla f\|_{L^p(\Omega,w)} \lesssim
          V_p(f; \Omega, w)$.
        \end{corollary}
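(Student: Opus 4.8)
The plan is to show that $f \in RBV^p(\Omega,w)$ forces $f$ to have a weak gradient in $L^p(\Omega,w)$ by extracting, from the finiteness of $V_p(f;\Omega,w)$, a uniform control on difference quotients of $f$, and then passing to the limit. First I would fix a ball $B = B(x_0,\rho)$ with $\overline{B}\subset\Omega$ and, for small $h\in\R^n$, cover a slightly shrunken concentric ball by a suitable (Vitali-type) collection of disjoint small balls of a common radius $r$ comparable to $|h|$; applying the definition of $V_p(f;\Omega,w)$ to this collection, together with the estimate $|f(y)-f(z)| \le 2\,\osc_{B_k}(f)$ whenever $y,z$ lie in the same $B_k$, yields
\begin{equation}\label{eq:diffquot-bound}
  \int_{B'} \left|\frac{f(x+h)-f(x)}{|h|}\right|^p w(x)\,\dif{x}
  \;\lesssim\; V_p(f;\Omega,w)^p
\end{equation}
uniformly in $h$ with $|h|$ small, where $B'$ is the shrunken ball. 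Here one must be a little careful because $f$ is only measurable; but the oscillation $\osc_{B_k}(f)$ is the essential oscillation, so $|f(y)-f(z)| \le \osc_{B_k}(f)$ holds for a.e.\ pair $y,z\in B_k$, which is exactly what is needed to bound the difference quotient after a change of variables.

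Next, from the uniform bound \eqref{eq:diffquot-bound}, since $w\in A_\infty$ is not assumed one instead works locally: on any fixed relatively compact $B'\Subset\Omega$ the weight $w$ is bounded above and below by positive constants only if $w$ is, say, locally bounded away from $0$ and $\infty$ — which need not hold. So the cleaner route is to keep the weight throughout: the family $\{(f(\cdot+h)-f(\cdot))/|h|\}$ is bounded in $L^p(B',w)$, and since $L^p(B',w)$ is a reflexive Banach space for $1<p<\infty$, a subsequence converges weakly, and the weak limit is readily identified (testing against $C_c^\infty$) as $\partial_j f$ in the distributional sense, with $\|\partial_j f\|_{L^p(B',w)} \lesssim V_p(f;\Omega,w)$. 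For $p=1$ one instead uses that bounded sequences in $L^1(B',w)$ which are also bounded in BV-type norm produce a distributional gradient that is a measure; but in fact the difference-quotient bound \eqref{eq:diffquot-bound} with $p=1$ together with a standard characterization shows $\partial_j f \in L^1(B',w)$ directly. Taking an exhaustion of $\Omega$ by such balls $B'$ and using that the constant in \eqref{eq:diffquot-bound} is independent of $B'$, monotone convergence gives $\|\nabla f\|_{L^p(\Omega,w)} \lesssim V_p(f;\Omega,w)$.

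The main obstacle I anticipate is the geometric/combinatorial step: producing, for each small $h$, a disjoint family of balls inside $\Omega$ whose contribution to the sum defining $V_p$ reconstructs the full integral of the difference quotient over $B'$, with constants independent of $h$ and of $B'$. The natural device is to tile (most of) $B'$ by disjoint balls of radius $r\approx|h|$ and then average over translations of the tiling so that no region is systematically undercounted — i.e.\ integrate the discrete sum against the position of the tiling and use Fubini. One must check that the packing is genuinely disjoint and stays inside $\Omega$ (hence the need to shrink $B'$ away from $\partial\Omega$ by more than $r$), and that the radius $r_k = r$ is bounded below so the factor $r_k^{-p}$ is controlled. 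Once this covering lemma is in place, \eqref{eq:diffquot-bound} and the functional-analytic limiting argument are routine. I expect this is exactly the content extracted from the ``only if'' direction of the proof of Theorem~\ref{theo:RBVpq-W1p}, which is why the corollary is stated as a byproduct of that proof.
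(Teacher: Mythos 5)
Your route is genuinely different from the paper's. The paper gets the corollary by observing that one half of the proof of Theorem~\ref{theo:RBVpq-W1p} never uses the $A_p$ condition: there one mollifies, uses $\int D_j\varphi\,\dif x=0$ to write $D_j(\varphi_R*f)(x)$ as an average of differences $f(x-y)-f(x)$ with $x,x-y$ in a common ball $B_k$, and sums over a lattice of cubes $Q_k\subset B_k$ after splitting the balls into $N$ disjoint subfamilies; this yields $\|D_j(\varphi_R*f)\|_{L^p(\Omega_0,w)}\lesssim V_p(f;\Omega,w)$ for every $p\geq 1$ and every weight, and one then lets $R\to 0$ and exhausts $\Omega$. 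Your plan replaces mollification by difference quotients. The covering step you single out is indeed the combinatorial heart, and your translated-lattice/averaging device (or simply finitely many shifted lattices, as in the paper) is sound and plays exactly the role of the paper's partition into $N$ disjoint subfamilies; it does give the uniform bound $\int_{B'}\bigl|\tfrac{f(x+h)-f(x)}{|h|}\bigr|^p w(x)\,\dif x\lesssim V_p(f;\Omega,w)^p$. (Also note the paper's $\osc$ is a genuine supremum, not an essential one, so your a.e.\ caveat is unnecessary.)

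The limiting step, however, has two genuine gaps. First, $p=1$, which the corollary includes: there is no ``standard characterization'' of $W^{1,1}$ by uniformly bounded $L^1$ difference quotients --- that condition characterizes $BV$, not $W^{1,1}$ (already on an interval the Heaviside function has finite Riesz $1$-variation and finite $L^1$ difference quotients, yet its distributional derivative is a point mass), and $L^1(B',w)$ is not reflexive; so neither of your suggested devices produces a weak gradient in $L^1$, and no argument from your bound alone can. What survives at $p=1$ is precisely the paper's a priori estimate for the smooth mollifications. Second, the identification of the weak limit for $1<p<\infty$: weak convergence in $L^p(B',w)$ gives convergence of $\int g_h\,\psi\,w\,\dif x$ only for $\psi\in L^{p'}(B',w)$, so to test against $\phi\in C_c^\infty(B')$ you need $\phi/w\in L^{p'}(B',w)$, i.e.\ $\int_{B'}|\phi|^{p'}w^{1-p'}\dif x<\infty$. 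For the arbitrary weight allowed in the corollary this may fail, so ``the weak limit is readily identified as $\partial_j f$'' is not justified under the stated hypotheses; the paper avoids this issue at the corresponding stage because it estimates the honest pointwise gradients of the smooth functions $\varphi_R*f$ rather than manufacturing a derivative by duality. If you restrict to $1<p<\infty$ and add a hypothesis guaranteeing $w^{1-p'}\in L^1_{loc}$ (e.g.\ $w\in A_p$), your difference-quotient argument does go through and gives a legitimate alternative proof; as stated, for ``any weight'' and for $p=1$, it does not.
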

        
        \begin{remark}
          Although we are primarily concerned with spaces defined with
              $A_{p}$ weights, Corollary~\ref{cor:RBV-embed} and
              Corollary~\ref{cor:doubling} below show that
              some of our results are true with
              weaker assumptions on the weights.  Many classical
              results for Sobolev spaces can be extended to the
              weighted case by using a wider class of weights: e.g.,  see
              \cite[Chapter 1]{Tero} for a discussion of Sobolev
              spaces defined using the so-called $p$-admissible weights.  It is
              an interesting question to determine the weakest
              hypotheses on the weights to define a \emph{rich} theory of weighted
              Riesz
              $p$-variation. 
            \end{remark}

            We can also give a weak-type estimate for the local
            Lipschitz constant of a function.  Recall that a function
            $f$ is said to be \textit{Lipschitz continuous} at $x$ if
	\[
	L_{f}(x)=\varlimsup_{y\rightarrow x}\frac{|f(x)-f(y)|}{|x-y|}<\infty.
	\]
        
	\begin{theorem}\label{prop:Main-Proposition}
		Assume that $w\in A_{p}$, $1\leq p<\infty$, and $f\in RBV^p(\R^n,w)$. Then for all
                $t>0$, 
		\[
                  w\left(\{x\in \subRn :  L_{f}(x)>t\}\right)
                  \lesssim \left(\frac{V_{p}(f; \R^n,w)}{t}\right)^{p}.
		\]
                 The implicit constants depend on $p, n$, and $[w]_{A_{p}}$. 	
              \end{theorem}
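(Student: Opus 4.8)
The plan is to obtain the weak-type estimate by comparing the local Lipschitz constant $L_f$ with a maximal-type operator built from the oscillation quantities that appear in the definition of $V_p(f;\R^n,w)$, and then invoking the weighted weak $(p,p)$ bound for the Hardy--Littlewood maximal operator, which holds precisely because $w\in A_p$. First I would show a pointwise control: if $x$ is a point where $L_f(x)>t$, then there exist arbitrarily small balls $B\ni x$ (say centered near $x$, of radius $r$) for which $\osc_B(f)/r \gtrsim t$, with an absolute dimensional constant absorbing the fact that the $\varlimsup$ in $L_f$ uses $|f(x)-f(y)|/|x-y|$ while $\osc$ is a supremum over the ball. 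This identifies $\{L_f>t\}$ (up to the constant) with a set where a ``Riesz maximal function'' $g^*(x):=\sup_{B\ni x}\osc_B(f)/r_B$ exceeds $ct$.

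The core of the argument is then a Vitali-type covering step. Fix a compact piece of $\{x: L_f(x)>ct\}$ (or argue with inner regularity of $w$, which is doubling since $w\in A_p\subset A_\infty$ — this is where Corollary~\ref{cor:doubling} or standard $A_\infty$ theory is used). Each such point has an associated ball $B_x$ with $(\osc_{B_x}(f)/r_{B_x})^p w(B_x) \gtrsim t^p w(B_x)$. Apply the Vitali covering lemma to extract a countable \emph{disjoint} subcollection $\{B_k\}$ whose fivefold dilates cover the set. Then, using that $w$ is doubling,
\[
  w\bigl(\{L_f>ct\}\bigr) \lesssim \sum_k w(5B_k) \lesssim \sum_k w(B_k)
  \lesssim \frac{1}{t^p}\sum_k \Bigl(\frac{\osc_{B_k}(f)}{r_k}\Bigr)^p w(B_k)
  \leq \frac{1}{t^p}\, V_p(f;\R^n,w)^p,
\]
since $\{B_k\}$ is an admissible disjoint family in the supremum defining $V_p$. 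Rescaling $t$ by the absolute constant $c$ gives the claimed inequality, with the implicit constant depending on $n$, $p$, and the doubling constant of $w$, hence on $[w]_{A_p}$.

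The main obstacle I anticipate is purely measure-theoretic bookkeeping: the set $\{L_f > t\}$ need not be open (or even Borel a priori if $f$ is merely measurable), so one must be careful about what ``$w(\{L_f>t\})$'' means and justify the Vitali extraction. I would handle this by first reducing to $f$ continuous — or work with the outer-measure version of $w$ — so that $\{g^*>ct\}$ is open and genuinely covered by the balls $B_x$; since $w$ is a doubling Borel measure, inner regularity by compact sets lets the covering argument go through with uniform constants. A secondary technical point is verifying that the extracted balls actually lie in $\Omega=\R^n$ (trivial here) and that each contributes a genuine term to the supremum defining $V_p$; this is immediate once the balls are disjoint. Note that, unlike Theorem~\ref{theo:RBVpq-W1p}, no restriction $p>n r_w$ is needed — only the $A_p$ (doubling) property enters — which is consistent with the remark that some results hold under weaker hypotheses on $w$.
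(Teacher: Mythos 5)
Your proposal is correct and follows essentially the same route as the paper: a pointwise lower bound $\osc_{B}(f)/r\gtrsim t$ at points where $L_f>t$, a Vitali $5r$-covering to extract a disjoint subfamily whose dilates cover the level set, the doubling property of $w$ (via the $A_p$ condition) to pass from $w(5B_k)$ to $w(B_k)$, and the observation that the disjoint family is admissible in the supremum defining $V_p(f;\R^n,w)$. The ``weighted weak $(p,p)$ maximal bound'' mentioned in your opening paragraph is never actually needed, and your compactness/inner-regularity precautions are harmless but unnecessary, since the $5r$-covering lemma applies directly to any family of balls with uniformly bounded radii, exactly as in the paper.
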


As a corollary to the proof of Theorem~\ref{prop:Main-Proposition}, we
have that the $A_p$ condition can be significantly weakened.  Given any ball $B=B(x,r)$, let
$2B:=B(x,2r)$.   We say that a weight is \emph{doubling} if $w(2B) \leq Cw(B)$ with a constant $C$ independent of $B$.

 \begin{corollary} \label{cor:doubling} 
Theorem~\ref{prop:Main-Proposition}  remains true   
when $w$ is only assumed to be doubling.
\end{corollary}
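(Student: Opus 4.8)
The plan is to revisit the proof of Theorem~\ref{prop:Main-Proposition} and observe that the hypothesis $w\in A_{p}$ is used there only through the single fact that $A_{p}$ weights are doubling; every other ingredient is weight-free or needs only doubling, so the same argument applies once ``$w\in A_{p}$'' is replaced by ``$w$ doubling.'' Concretely, recall the skeleton of that proof. Fix $t>0$ and put $E_{t}=\{x\in\subRn : L_{f}(x)>t\}$, where $L_{f}$ is the local Lipschitz constant. For each $x\in E_{t}$ there is a point $y$ with $|x-y|<1$ and $|f(x)-f(y)|>t|x-y|$; the ball $B_{x}=B\bigl(\tfrac{x+y}{2},|x-y|\bigr)$ then contains $x$, has radius at most $1$, and satisfies $\osc_{B_{x}}(f)/r_{B_{x}}>t$. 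The family $\{B_{x}\}_{x\in E_{t}}$ covers $E_{t}$, and the $5r$-covering lemma --- a purely metric statement that uses nothing about $w$ --- extracts a countable pairwise disjoint subfamily $\{B_{k}\}$ with $\osc_{B_{k}}(f)/r_{B_{k}}>t$ for each $k$ and $E_{t}\subseteq\bigcup_{k}5B_{k}$.

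From here only two properties of $w$ are invoked: that $w$ induces a locally finite Borel measure, so that $w(E_{t})\le\sum_{k}w(5B_{k})$; and that $w(5B_{k})\lesssim w(B_{k})$ uniformly in $k$. The first holds for any weight, by countable subadditivity of the integral, and the second is precisely the doubling condition: since $5B\subseteq 8B$ and $8B=2(2(2B))$, iterating $w(2B)\le Cw(B)$ three times gives $w(5B_{k})\le C^{3}w(B_{k})$. Consequently,
\[
w(E_{t})\ \le\ \sum_{k}w(5B_{k})\ \lesssim\ \sum_{k}w(B_{k})\ \le\ \frac{1}{t^{p}}\sum_{k}\Bigl(\frac{\osc_{B_{k}}(f)}{r_{B_{k}}}\Bigr)^{p}w(B_{k})\ \le\ \frac{1}{t^{p}}\,V_{p}(f;\R^{n},w)^{p},
\]
where the last inequality is the definition of $V_{p}(f;\R^{n},w)$ applied to the disjoint collection $\{B_{k}\}$. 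This is the asserted bound, now with implicit constant depending on $p$, $n$, and the doubling constant of $w$ in place of $[w]_{A_{p}}$. If instead one prefers the Vitali covering theorem in the stronger form $w(E_{t}\setminus\bigcup_{k}B_{k})=0$, the conclusion is unchanged, since that theorem is also valid for doubling Borel measures.

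I expect no genuinely hard step: the entire content is the bookkeeping observation that the covering argument never touches the $A_{p}$ structure beyond doubling. The one point I would verify with some care is that the construction of the covering balls $B_{x}$ and the (outer) measurability of $E_{t}$ do not secretly depend on $w$ --- and they do not, since $B_{x}$ is read directly off the definition of $L_{f}$ and the $5r$-lemma is entirely weight-independent --- so nothing else in the proof of Theorem~\ref{prop:Main-Proposition} needs to be re-examined.
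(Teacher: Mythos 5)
Your proposal is correct and follows essentially the same route as the paper: the paper's proof of the corollary is exactly the observation that the covering argument in Theorem~\ref{prop:Main-Proposition} invokes the $A_p$ hypothesis only (via Lemma~\ref{lemma: GeneralDoublingCon}) to get $w(5B_k)\lesssim w(B_k)$, which doubling supplies directly. Your minor variations (balls centered at the midpoint of $x$ and $y$, iterating $w(2B)\le Cw(B)$ to reach $5B$) are harmless and do not change the argument.
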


A straightforward consequence of
              Theorems~\ref{theo:RBVpq-W1p}
              and~\ref{prop:Main-Proposition} is the following result.

\begin{corollary}\label{cor:Almost-Differentiability-W1p}
  Let $w\in A_p$ and  $1\leq p<\infty$. If $f\in RBV^p(\R^n,w)$, then
  $f$ is differentiable almost everywhere.  Moreover, if  $p>nr_w$ and $f\in W^{1,p}(\subRn, w)$,
then $f$ is differentiable almost everywhere.
\end{corollary}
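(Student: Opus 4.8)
The plan is to deduce both assertions from Stepanov's differentiability theorem, which states that a measurable function $g:U\to\R$ defined on an open set $U\subseteq\subRn$ is totally differentiable at Lebesgue-almost every point of the set $\{x\in U:L_g(x)<\infty\}$. Given this, it suffices to show in each of the two cases that $L_f(x)<\infty$ for Lebesgue-a.e.\ $x\in\subRn$.

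For the first statement, suppose $f\in RBV^p(\subRn,w)$ with $w\in A_p$. We would apply Theorem~\ref{prop:Main-Proposition} to get
\[
  w\bigl(\{x\in\subRn:L_f(x)>t\}\bigr)\lesssim\left(\frac{V_p(f;\subRn,w)}{t}\right)^{p}
\]
for every $t>0$. Letting $t\to\infty$ and using $\{x:L_f(x)=\infty\}=\bigcap_{t>0}\{x:L_f(x)>t\}$, this forces $w(\{L_f=\infty\})=0$. Since $w\in A_p\subseteq A_\infty$ is locally integrable and positive almost everywhere, the measures $w\,dx$ and $dx$ share the same null sets, so $\{L_f=\infty\}$ has Lebesgue measure zero; that is, $L_f<\infty$ a.e. Stepanov's theorem then yields that $f$ is differentiable almost everywhere, proving the first claim.

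For the second statement, assume $p>nr_w$ and $f\in W^{1,p}(\subRn,w)$. Because $r_w\geq1$, the hypothesis gives $p>n$, and together with $w\in A_p$ and $\Omega=\subRn$ this puts us in the setting of Theorem~\ref{theo:RBVpq-W1p}. That theorem provides a Lebesgue-null set off which $f$ coincides with a continuous function lying in $RBV^p(\subRn,w)\cap L^p(\subRn,w)$. Redefining $f$ on a null set affects neither membership in $RBV^p(\subRn,w)$ nor the a.e.\ differentiability we wish to establish, so we may assume $f\in RBV^p(\subRn,w)$ and invoke the first part to conclude.

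The argument is short once Theorems~\ref{theo:RBVpq-W1p} and~\ref{prop:Main-Proposition} are in hand, so there is no serious obstacle; the only points requiring care are the identification of $w$-null sets with Lebesgue-null sets (valid because an $A_p$ weight yields a measure mutually absolutely continuous with Lebesgue measure) and the verification that Stepanov's theorem applies, which it does since $f$ is a measurable real-valued function by hypothesis.
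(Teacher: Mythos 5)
Your proposal is correct and follows essentially the same route as the paper: apply Theorem~\ref{prop:Main-Proposition} to see that $\{L_f=\infty\}$ is $w$-null, pass to Lebesgue-nullity, invoke Stepanov's theorem, and reduce the Sobolev case to the $RBV^p$ case via Theorem~\ref{theo:RBVpq-W1p}. The only (harmless) difference is cosmetic: you justify that $w$-null sets are Lebesgue-null by the a.e.\ positivity of an $A_p$ weight, while the paper does so via H\"older's inequality and the local integrability of $w^{1-p'}$.
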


\begin{remark}
  In the unweighted case, the differentiability of functions in
  $W^{1,p}(\R^n)$, $p>n$, is due to Calder\'on~\cite{MR0045200} (see
  also~\cite[Theorem~6.17]{Heinonen2001}).   The unweighted version of
  Corollary~\ref{cor:Almost-Differentiability-W1p} was proved by Barza
  and Lind~\cite{Barza-Lind2015}; for the proof of the
  differentiability of functions in $RBV^p(\R^n)$ they assume that $p>n$, 
  but this is not needed in the proof.
\end{remark}

When $p=n$, Mal\'y~\cite{Maly1999} defined a space of functions of
bounded $n$-variation that is equivalent to $RBV^n(\Omega)$ and proved
that it is continuously embedded in $W^{1,n}(\Omega)$.  Furthermore, he
showed that the elements of $RBV^n(\Omega)$ are differentiable almost
everywhere.  By modifying the proofs of Theorem~\ref{theo:RBVpq-W1p}
and~\ref{prop:Main-Proposition} we can further extend this result to the
weighted case.

 \begin{theorem}\label{thm:embedding-n-variation-sobolev}
   Let $w \in A_n$ and 
   $f\in RBV^{n}(\Omega,w)\cap L^n(\Omega, w)$. Then
   $f\in W^{1,n}(\Omega,w)$ with $\|\nabla f\|_{L^p(\Omega,w)}\lesssim
   V_n(f; \Omega, w)$.  Moreover, $f $
   is differentiable almost everywhere.
 \end{theorem}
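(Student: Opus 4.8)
The plan is to reuse the proofs of Theorem~\ref{theo:RBVpq-W1p} and Theorem~\ref{prop:Main-Proposition}, keeping in mind that at the endpoint $p=n$ the Sobolev embedding no longer produces continuous functions, so only the embedding $RBV^{n}(\Omega,w)\cap L^{n}(\Omega,w)\hookrightarrow W^{1,n}(\Omega,w)$ and the almost‑everywhere differentiability survive; the latter is the weighted form of Mal\'y's theorem and carries the real content. For the embedding I would first observe that $f\in RBV^{n}(\Omega,w)$ is locally essentially bounded: testing the supremum defining $V_{n}(f;\Omega,w)$ against a single ball $B\subset\Omega$ gives $\osc_{B}(f)\le r_{B}\,w(B)^{-1/n}\,V_{n}(f;\Omega,w)$, and $w(B)>0$ because $w\in A_{n}\subset A_{\infty}$. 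Hence $f\in L^{1}_{\mathrm{loc}}(\Omega)$, so its weak derivatives are defined, and Corollary~\ref{cor:RBV-embed} — which is already stated for every $1\le p<\infty$ and every open $\Omega$, and whose proof uses nothing about $p>n$ — yields $\|\nabla f\|_{L^{n}(\Omega,w)}\lesssim V_{n}(f;\Omega,w)$ together with $\nabla f\in L^{n}(\Omega,w)$. Combined with the hypothesis $f\in L^{n}(\Omega,w)$, this is precisely $f\in W^{1,n}(\Omega,w)$ with the asserted norm bound.

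For the differentiability, since differentiability is a local property and every point of the open set $\Omega$ lies in a ball $B$ with $2B\subset\Omega$, it suffices to prove differentiability a.e.\ on each such $B$. I would rerun the proof of Theorem~\ref{prop:Main-Proposition} with $p=n$, but covering the superlevel set $\{x\in B:L_{f}(x)>t\}$ only by balls $B(x,r_{x})\subset 2B$ of small radius — which is legitimate because $L_{f}(x)$ depends only on the values of $f$ arbitrarily close to $x$ — to obtain the localized weak‑type estimate $w(\{x\in B:L_{f}(x)>t\})\lesssim (V_{n}(f;\Omega,w)/t)^{n}$ for all $t>0$. Letting $t\to\infty$ gives $L_{f}(x)<\infty$ for $w$‑a.e.\ $x\in B$, hence for Lebesgue‑a.e.\ $x\in B$ because $w\in A_{\infty}$ forces $w\,dx$ and $dx$ to share the same null sets. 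Stepanov's differentiability theorem then upgrades the finiteness of the pointwise Lipschitz constant a.e.\ on $B$ to differentiability a.e.\ on $B$, and exhausting $\Omega$ by such balls finishes the proof.

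The main obstacle is checking that these two ingredients really survive the endpoint and the passage from $\R^n$ to a general open set. One must verify that the covering argument behind Theorem~\ref{prop:Main-Proposition} — a Vitali‑type selection combined with the doubling/$A_{n}$ property of $w$ (cf.\ Corollary~\ref{cor:doubling}) — is genuinely local, so that restricting the selected balls to lie inside $2B\subset\Omega$ changes nothing, and that its implied constants remain finite when $p=n$. One should also confirm that $f\in L^{\infty}_{\mathrm{loc}}(\Omega)$ is enough regularity for Stepanov's theorem to apply (it is: $f$ is measurable, so $\{x:L_{f}(x)<\infty\}$ is measurable, which is all that is needed). Everything else — the reduction to balls, the $w$‑a.e.\ to Lebesgue‑a.e.\ transfer, and the invocation of Corollary~\ref{cor:RBV-embed} — is routine.
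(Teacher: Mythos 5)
Your proof is correct and follows essentially the same route as the paper's: the embedding and gradient bound come from Corollary~\ref{cor:RBV-embed} together with the hypothesis $f\in L^n(\Omega,w)$, and the almost everywhere differentiability comes from the weak-type estimate of Theorem~\ref{prop:Main-Proposition} combined with the transfer of $w$-null sets to Lebesgue-null sets and Stepanov's theorem, exactly as in Corollary~\ref{cor:Almost-Differentiability-W1p}. The additional care you take (local boundedness of $f$ from testing $V_n$ on a single ball, and localizing the Vitali covering argument to balls $B$ with $2B\subset\Omega$ so the weak-type estimate applies on a general open set rather than only on $\R^n$) simply fills in details that the paper's very short proof leaves implicit.
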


 \medskip
 
        As an application of Theorem~\ref{theo:RBVpq-W1p} and the
        Rubio de Francia extrapolation theory, we extend this result
        to the variable exponent Sobolev spaces.
        Define the variable Lebesgue space $\Lp(\Omega)$ to be all
        functions $f$ such that
        \begin{equation} \label{eqn:var-norm}
          \|f\|_{L^{p(\cdot)}(\Omega)}
          =\inf  \Big\{\lambda>0 :
          \rho_{p(\cdot),\Omega}\left( f/\lambda \right)\leqslant 1 \Big\},
              \end{equation}
	where 
        \begin{equation} \label{eqn:var-modular}
	\rho_{p(\cdot),\Omega}(f):=\int_{\Omega}|f(x)|^{p(x)}\dif{x}.
      \end{equation}
      The variable exponent Sobolev space $W^{1,\pp}(\Omega)$ consists of all functions $f\in
\Lp(\Omega)$ that are weakly differentiable and $\nabla f \in
\Lp(\Omega)$.
        We say $\pp \in LH(\Omega)$, $\pp : \Omega \rightarrow  [1,\infty)$,  if it is log-H\"older continuous
        locally and at infinity, i.e. it satisfies  \eqref{eq:local_log_Holder} and
        \eqref{eq:infinity_log_Holder}, respectively.
        
The space of functions of bounded $\pp$-variation,
$RBV^\pp(\Omega)$, is intuitively defined as the space
$RBV^p(\Omega)$, but with the constant exponent $p$ replaced by a
variable exponent $\pp$.  We defer the precise definition to
Section~\ref{sec:Variable-expoent-section}.  

\begin{theorem}\label{theo:Riesz_Variable-Exponent}
  Given an open set $\Omega \subset \R^n$, let $p(\cdot)\in
  LH(\Omega)$, $n<p_-\leqslant p_+<\infty$. Then 
  $f\in W^{1,p(\cdot)}(\Omega)$ if and only if $f$ is  a continuous function (possibly after being
                redefined on a null set)  and
                $f \in RBV^{p(\cdot)}(\Omega)\cap \Lp(\Omega)$.
                Furthermore,
		\begin{equation}\label{eq: var-exponent_Riesz}
			\|f\|_{RBV^{p(\cdot)}(\Omega)} \approx \|\nabla f\|_{p(\cdot),\Omega}.
                      \end{equation}
	\end{theorem}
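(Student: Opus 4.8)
The plan is to derive the norm equivalence~\eqref{eq: var-exponent_Riesz} from Theorem~\ref{theo:RBVpq-W1p} and Corollary~\ref{cor:RBV-embed} by Rubio de Francia extrapolation, and then to read the two membership statements off it. The observation that makes extrapolation applicable is that, for a countable family $\D=\{B_k\}$ of pairwise disjoint balls $B_k=B(x_k,r_k)\subset\Omega$, the fixed (exponent- and weight-independent) function
\[
  g_\D(x):=\sum_k \frac{\osc_{B_k}(f)}{r_k}\,\chi_{B_k}(x)
\]
satisfies $\int_\Omega g_\D^{q}\,w\,\dif x=\sum_k(\osc_{B_k}(f)/r_k)^{q}\,w(B_k)$ by disjointness, so that $V_q(f;\Omega,w)=\sup_\D\|g_\D\|_{L^{q}(\Omega,w)}$ for every $1\le q<\infty$ and every weight $w$, and likewise $\|f\|_{RBV^{\pp}(\Omega)}=\sup_\D\|g_\D\|_{\pp,\Omega}$. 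Thus extrapolation can be applied to each single pair $(g_\D,|\nabla f|)$ separately and the supremum over $\D$ will survive every step. I will freely use the following standard facts, all valid because $\pp\in LH(\Omega)$ with $n<p_-\le p_+<\infty$: $\M$ is bounded on $\Lcp(\Omega)$ and, for each $p_0\in(n,p_-)$, on $L^{(\pp/p_0)'}(\Omega)$; the associate space of $\Lp(\Omega)$ is $\Lcp(\Omega)$ (the norm conjugate formula) and the generalized H\"older inequality holds in the variable spaces; and the Rubio de Francia algorithm $\Rf$ attached to $\M$ on a space $X$ sends $h\ge0$ with $\|h\|_X<\infty$ to $\Rf h\in A_1$ with $h\le\Rf h$, $\|\Rf h\|_X\le2\|h\|_X$, and $[\Rf h]_{A_1}\le2\|\M\|_{X\to X}$.

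For the embedding inequality $\|\nabla f\|_{\pp,\Omega}\lesssim\|f\|_{RBV^{\pp}(\Omega)}$ --- and, with it, the fact that a continuous $f\in RBV^{\pp}(\Omega)\cap\Lp(\Omega)$ lies in $W^{1,\pp}(\Omega)$ --- I would proceed as follows. By the norm conjugate formula it suffices to bound $\int_\Omega|\nabla f|\,h$ for every $h\ge0$ with $\|h\|_{\cpp,\Omega}\le1$. Applying $\Rf$ built from $\M$ on $\Lcp(\Omega)$ produces $w:=\Rf h\in A_1$ with $h\le w$ and $\|w\|_{\cpp,\Omega}\le2$. Generalized H\"older gives, uniformly in $\D$, $\|g_\D\|_{L^{1}(\Omega,w)}\lesssim\|g_\D\|_{\pp,\Omega}\|w\|_{\cpp,\Omega}\lesssim\|f\|_{RBV^{\pp}(\Omega)}$, so $V_1(f;\Omega,w)<\infty$, i.e.\ $f\in RBV^{1}(\Omega,w)$; Corollary~\ref{cor:RBV-embed} then supplies the weak gradient of $f$ together with $\|\nabla f\|_{L^{1}(\Omega,w)}\lesssim V_1(f;\Omega,w)\lesssim\|f\|_{RBV^{\pp}(\Omega)}$. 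Since $h\le w$, $\int_\Omega|\nabla f|\,h\le\int_\Omega|\nabla f|\,w\lesssim\|f\|_{RBV^{\pp}(\Omega)}$, and taking the supremum over $h$ gives $\|\nabla f\|_{\pp,\Omega}\lesssim\|f\|_{RBV^{\pp}(\Omega)}<\infty$; together with $f\in\Lp(\Omega)$ this yields $f\in W^{1,\pp}(\Omega)$.

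For the reverse inequality $\|f\|_{RBV^{\pp}(\Omega)}\lesssim\|\nabla f\|_{\pp,\Omega}$ --- and the fact that $f\in W^{1,\pp}(\Omega)$ is continuous (by the variable-exponent Morrey--Sobolev embedding applied on balls, which is available since $p_->n$) and lies in $RBV^{\pp}(\Omega)\cap\Lp(\Omega)$ --- this is where limited-range extrapolation enters, realized concretely via $\Rf$. Fix $p_0\in(n,p_-)$ and work with $g_\D^{p_0}$: by the norm conjugate formula for $L^{\pp/p_0}(\Omega)$ it suffices to bound $\int_\Omega g_\D^{p_0}H$ over $H\ge0$ with $\|H\|_{(\pp/p_0)',\Omega}\le1$. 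Applying $\Rf$ built from $\M$ on $L^{(\pp/p_0)'}(\Omega)$ produces $w:=\Rf H\in A_1$ with $H\le w$ and $\|w\|_{(\pp/p_0)',\Omega}\le2$. Then $w\in A_{p_0}$ with $p_0>n=n\,r_w$ (since $r_w=1$ for any $A_1$ weight), and $f,\nabla f\in L^{p_0}(\Omega,w)$ by generalized H\"older because $f,\nabla f\in\Lp(\Omega)$; hence $f\in W^{1,p_0}(\Omega,w)$ and Theorem~\ref{theo:RBVpq-W1p} gives $\|g_\D\|_{L^{p_0}(\Omega,w)}\le V_{p_0}(f;\Omega,w)\lesssim\|\nabla f\|_{L^{p_0}(\Omega,w)}$, with a constant controlled by $[w]_{A_{p_0}}\le[\Rf H]_{A_1}$ and hence uniform in $\D$ and in $H$. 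Consequently $\int_\Omega g_\D^{p_0}H\le\int_\Omega g_\D^{p_0}w=\|g_\D\|_{L^{p_0}(\Omega,w)}^{p_0}\lesssim\|\nabla f\|_{L^{p_0}(\Omega,w)}^{p_0}=\int_\Omega|\nabla f|^{p_0}w\lesssim\|\nabla f\|_{\pp,\Omega}^{p_0}$, the last inequality being generalized H\"older against $w$ with $\|w\|_{(\pp/p_0)',\Omega}\le2$; taking the supremum over $H$ yields $\|g_\D\|_{\pp,\Omega}^{p_0}=\|g_\D^{p_0}\|_{\pp/p_0,\Omega}\lesssim\|\nabla f\|_{\pp,\Omega}^{p_0}$, and the supremum over $\D$ finally gives $\|f\|_{RBV^{\pp}(\Omega)}\lesssim\|\nabla f\|_{\pp,\Omega}<\infty$, so $f\in RBV^{\pp}(\Omega)$.

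I expect the main difficulty to be conceptual rather than computational. The Riesz $\pp$-variation is not the $\Lp$-norm of a single function but a supremum over the families $\D$, and the weighted bound $V_{p_0}(f;\Omega,w)\lesssim\|\nabla f\|_{L^{p_0}(\Omega,w)}$ from Theorem~\ref{theo:RBVpq-W1p} is available only in the limited range $p_0>n\,r_w$ (and presupposes $\nabla f\in L^{p_0}(\Omega,w)$). Both points are handled by the decomposition $V_{p_0}(f;\Omega,w)=\sup_\D\|g_\D\|_{L^{p_0}(\Omega,w)}$ with the $g_\D$ fixed, by the fact that the Rubio de Francia weights $w=\Rf H\in A_1$ automatically satisfy $p_0>n=n\,r_w$, and by checking $f,\nabla f\in L^{p_0}(\Omega,w)$ through H\"older against $w\in L^{(\pp/p_0)'}(\Omega)$. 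The remaining ingredients --- the local Morrey--Sobolev continuity of $W^{1,\pp}(\Omega)$ when $p_->n$ and the boundedness of $\M$ on the auxiliary variable spaces --- are standard for $LH$ exponents with $p_+<\infty$, and I would simply cite them.
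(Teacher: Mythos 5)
Your proposal is correct in substance and reaches the same two inequalities as the paper, but it implements the extrapolation in a genuinely different way. The paper quotes the abstract limited-range extrapolation theorem of Cruz-Uribe--Wang (Theorem~\ref{theo:interpo_result}) and therefore has to package everything into families of extrapolation pairs whose left-hand sides are a priori finite: it truncates the operator to $G_{\D_N}$, works first with $f\in C^\infty(\Omega)\cap W^{1,\pp}(\Omega)$ and uses density plus Fatou's lemma in $\Lp$, and in the converse direction pairs $\langle\nabla f\rangle_N$ with a collection $\D$ chosen to saturate the supremum defining $V_{p_-}$; moreover it first secures weak differentiability via $RBV^{\pp}(\Omega)\subset RBV^{p_-}(\Omega_0)\subset W^{1,p_-}(\Omega_0)$. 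You instead run the Rubio de Francia algorithm by hand: for $\|\nabla f\|_{\pp}\lesssim\|f\|_{RBV^{\pp}}$ you dualize against $h$ with $\|h\|_{\cpp}\le 1$, majorize by $\Rf h\in A_1$ and apply Corollary~\ref{cor:RBV-embed} at exponent $1$; for the reverse inequality you dualize $g_\D^{p_0}$ in $L^{\pp/p_0}$, majorize by an $A_1$ weight, and apply Theorem~\ref{theo:RBVpq-W1p} at a fixed $p_0\in(n,p_-)$, exploiting that $A_1$ weights automatically satisfy $p_0>nr_w$ so no reverse H\"older class is needed. This buys a cleaner argument with no truncations, no density of smooth functions, no saturating choice of $\D$, and constants that are manifestly uniform in $\D$ and in the dual function; what it costs is that you must verify the ingredients of extrapolation yourself (boundedness of $\M$ on $\Lcp(\Omega)$ and $L^{(\pp/p_0)'}(\Omega)$, the norm conjugate formula), whereas the paper outsources this to the quoted theorem.

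Two points should be tightened. First, the identity $\|f\|_{RBV^{\pp}(\Omega)}=\sup_\D\|g_\D\|_{\pp,\Omega}$ is not the paper's definition of $RBV^{\pp}$ (which uses the harmonic means $p_{B_k}$ and $\|\chi_{B_k}\|_{\pp}$ in a Luxemburg construction); it is only an equivalence, Proposition~\ref{cor:AveragingOp_RBV_equivalence}, whose proof needs $\pp\in LH(\Omega)$ via Lemma~\ref{theo:sequence_Lp_norm_ineq} --- cite it and replace your equalities by $\approx$. Second, in the embedding direction you extract the \emph{existence} of the weak gradient from Corollary~\ref{cor:RBV-embed} at $p=1$; the compactness step hiding behind that corollary (passing from bounded mollified derivatives to a weak derivative) is delicate in $L^1(w)$, where the limit could a priori be a measure. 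This is easily repaired, either by first noting, as the paper does, that $RBV^{\pp}(\Omega)\subset RBV^{p_-}(\Omega_0)\subset W^{1,p_-}(\Omega_0)$ for $\Omega_0\Subset\Omega$ (Barza--Lind, since $p_->n$), so $\nabla f$ exists locally before you estimate it, or by running your duality argument at the exponent $p_0\in(n,p_-)$ (dualizing $|\nabla f|^{p_0}$ in $L^{\pp/p_0}$) so that Corollary~\ref{cor:RBV-embed} is only invoked with $p_0>1$. Neither change affects the architecture of your proof.
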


              \medskip

	The remainder of this paper is organized as follows.  In
        Section \ref{Sec:Pre} we give some definitions and auxiliary
        results regarding $A_{p}$ weights, define and give some basic
        properties of variable Lebesgue spaces, and define the
        weighted Riesz bounded variation spaces
        $RBV^{p}(\Omega,w)$. Some embedding results in this new scale
        of functions are established as well. In
        Section~\ref{sec:proof main theorem} we prove
        Theorems~\ref{theo:RBVpq-W1p},~\ref{prop:Main-Proposition},
        and~\ref{thm:embedding-n-variation-sobolev}. Finally, in
        Section~\ref{sec:Variable-expoent-section} we define the
        variable exponent Riesz bounded variation space
        $RBV^{p(\cdot)}(\Omega)$ and prove
        Theorem~\ref{theo:Riesz_Variable-Exponent}.

	\section{Preliminaries}\label{Sec:Pre}

        Throughout this paper $C,c$ will denote constants, depending
        on the underlying parameters, that may
        change their values even from line to line. Given $A,B>0$, we
        write $A\lesssim B$ if there exists $C>0$ such that
        $A\leqslant CB$. Additionally, if $A\lesssim B$ and
        $B\lesssim A$ simultaneously, we write $A\approx B$.  In the
        sequel we regard $\Omega$ as an open subset of $\subRn$. For
        an open ball with center $x$ and radius $r>0$ we write
        $B(x,r)$. Throughout this paper all the cubes $Q\subset \R^n$
        will have their sides parallel to the coordinate axis.  We
        denote the integral average of $f$ over the measurable subset
        $E$ by
	\[ \langle f \rangle_E := \frac{1}{|E|}\int_E f(x) \dif  x = \avgint_E f(x)\dif x ,\,\, |E|>0,\]
	where $|E|$ denotes the Lebesgue measure of the set $E$.

	\subsection{Muckenhoupt $A_p$ weights}
        By a weight we mean a non-negative, locally integrable
        function.  The measure of $E$ induced by the weight $w$ is  $w(E):=\int_{E}w(x)\dif{x}$.
	
A weight $w$ belongs to the Muckenhoupt $A_p$ class, $1<p<\infty$,
denoted $w\in A_p$,  if 
	\[[w]_{A_{p}}=\sup_Q \avgint_Q w\dif x \bigg(\avgint_Q w^{1/(1-p)}\dif x\bigg)^{p-1} <
	\infty, \]
	where the supremun is taken over all cubes (or balls) in
        $\subRn$. When $p=1$, we say that $w\in A_1$ if
        \[ [w]_{A_1} = \sup_Q \esssup_{x\in Q} w(x)^{-1}\avgint_Q
          w\dif x < \infty.  \]
The class $A_{\infty}$ is defined by
	\[
	A_\infty= \bigcup_{p\geqslant 1}A_p.
      \]
      For more information on $A_p$ weights, and for proofs of the
      following results which we will need below, 
      see~\cite{JavierDuBook,grafakosmodern}.
      It follows from H\"older's inequality and the definition that if
      $1\leqslant p_1<p_2<\infty$, then $ A_{p_1} \subset A_{p_2}$.
      Moreover, the $A_p$ classes are left-open:  if $w\in A_p$ for
      some $p>1$, then there exists $\varepsilon>0$ such that $w\in
      A_{p-\varepsilon}$  (see~\cite[Corollary~7.6]{JavierDuBook}).
      Consequently, recalling that we defined
      \[ r_w = \inf\{ q>1 : w \in A_q \}, \]
      if $w\in A_p$, then $r_w<p$, and for every $q>r_w$, $w\in A_q$.

	\begin{lemma}\label{lemma: GeneralDoublingCon}
		Assume that  $w\in A_p$, $1\leqslant p < \infty$. Then 
		for every measurable subset $E$ of $Q$, 
		\begin{equation*}
			\left(\frac{|E|}{|Q|}\right)^{p}\leqslant [w]_{A_p}\frac{w(E)}{w(Q)}.	   
		\end{equation*}       
	\end{lemma}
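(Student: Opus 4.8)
The plan is to treat the cases $p>1$ and $p=1$ separately, in each case reducing the claimed inequality to the defining $A_p$ condition by a single application of Hölder's inequality (for $p>1$) or of the pointwise $A_1$ bound (for $p=1$).

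For $p>1$, the first step is to estimate $|E|$ using Hölder's inequality with exponents $p$ and $p'=p/(p-1)$:
\[
|E| = \int_E w(x)^{1/p}\, w(x)^{-1/p}\dif x
\le \left(\int_E w\dif x\right)^{1/p}\left(\int_E w^{-1/(p-1)}\dif x\right)^{1/p'}.
\]
Since $E\subset Q$ and $w^{-1/(p-1)}=w^{1/(1-p)}\ge 0$, I enlarge the second integral to an integral over $Q$ and then raise both sides to the power $p$, obtaining
\[
|E|^p \le w(E)\left(\int_Q w^{1/(1-p)}\dif x\right)^{p-1}.
\]
The second step is simply to read off the definition of $[w]_{A_p}$ in the form
\[
\left(\int_Q w^{1/(1-p)}\dif x\right)^{p-1} \le [w]_{A_p}\,\frac{|Q|^p}{w(Q)},
\]
which comes from multiplying out the normalizing factors $|Q|^{-1}$. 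Combining the two displays and dividing by $|Q|^p$ yields the asserted inequality.

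For $p=1$, I would instead use the pointwise consequence of the $A_1$ condition: from the definition, $\langle w\rangle_Q \le [w]_{A_1}\,w(x)$ for a.e.\ $x\in Q$, hence $\essinf_{x\in Q} w(x) \ge [w]_{A_1}^{-1}\langle w\rangle_Q$. Since $E\subset Q$,
\[
w(E) = \int_E w(x)\dif x \ge |E|\,\essinf_{x\in Q} w(x) \ge [w]_{A_1}^{-1}\,|E|\,\frac{w(Q)}{|Q|},
\]
and rearranging gives $|E|/|Q| \le [w]_{A_1}\, w(E)/w(Q)$, which is exactly the $p=1$ case since then $(|E|/|Q|)^p = |E|/|Q|$.

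There is no genuine obstacle here; the only points requiring care are the bookkeeping of the exponents $p$ and $p'$ in the Hölder step and the correct extraction of the constant $[w]_{A_p}$ from the normalization by $|Q|$, both of which are routine.
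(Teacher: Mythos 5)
Your proof is correct: the Hölder step with exponents $p$ and $p'$ applied to $w^{1/p}w^{-1/p}$ on $E$, the enlargement of the dual-weight integral from $E$ to $Q$, and the extraction of $[w]_{A_p}$ from the normalized definition all check out, as does the pointwise $\essinf$ argument for $p=1$. The paper does not prove this lemma at all (it is quoted as a standard fact with references to Duoandikoetxea and Grafakos), and your argument is precisely the standard textbook proof, so there is nothing to compare beyond noting that you supplied the proof the paper omits.
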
  
	
	We say that $w\in RH_{s}$, for some $s>1$, if 
	\begin{equation}\label{eq:RHIdef}
		[w]_{RH_{s}} = \sup_Q \frac{ \langle w ^s \rangle_Q ^{1/s}  }{ \langle w  \rangle_Q} < \infty, 
	\end{equation}
	where the supremun is taken over all cubes  in $\subRn$.
        
	\begin{lemma}\label{lemma:Ainfty_RHS}
		Assume that $w\in A_{\infty}$. Then there exists $s>1$, depending on $[w]_{A_p}$, such that $w \in RH_s$.
	\end{lemma}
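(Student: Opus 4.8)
The plan is to prove Lemma~\ref{lemma:Ainfty_RHS} by the classical distribution-function (layer-cake) method, with the only weight-theoretic input being Lemma~\ref{lemma: GeneralDoublingCon}. First I would extract from that lemma the \emph{self-improving $A_\infty$ property}: since $w\in A_\infty$, fix $p$ with $w\in A_p$; applying Lemma~\ref{lemma: GeneralDoublingCon} to the complement $Q\setminus E$ of a measurable set $E\subseteq Q$ with $|E|\le\tfrac12|Q|$ yields $w(E)\le \beta\, w(Q)$, where $\beta:=1-2^{-p}/[w]_{A_p}\in(0,1)$ depends only on $p$ and $[w]_{A_p}$. The point of this reformulation is that it is now a statement about the measure $w\,dx$ that can be iterated along a Calderón--Zygmund stopping time.

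Next I would fix a cube $Q_0$, normalize by $\langle w\rangle_{Q_0}$, and for each height $t\ge\langle w\rangle_{Q_0}$ run the Calderón--Zygmund decomposition of $w$ on $Q_0$: a family of disjoint dyadic subcubes $\{Q_j^{(t)}\}$ with $t<\langle w\rangle_{Q_j^{(t)}}\le 2^n t$ and $w\le t$ a.e.\ off $E_t:=\bigcup_j Q_j^{(t)}$; in particular $\{x\in Q_0:w(x)>t\}\subseteq E_t$ up to a null set. The heart of the argument is the decay estimate $w(E_{ct})\le \beta\, w(E_t)$ with $c:=2^{n+1}$. This follows because, by dyadic nesting, each level-$ct$ cube sits inside a unique $Q_j^{(t)}$, and summing $|Q_i^{(ct)}|<\tfrac1{ct}\int_{Q_i^{(ct)}}w$ over the cubes contained in a fixed $Q_j^{(t)}$ gives $|E_{ct}\cap Q_j^{(t)}|<\tfrac{2^n}{c}|Q_j^{(t)}|=\tfrac12|Q_j^{(t)}|$; the self-improving property then bounds $w(E_{ct}\cap Q_j^{(t)})\le\beta\,w(Q_j^{(t)})$, and one sums over $j$. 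Iterating, $w(E_{c^k\langle w\rangle_{Q_0}})\le\beta^k w(Q_0)$, and interpolating over $k$ produces a genuine power decay: $w(\{x\in Q_0:w(x)>t\})\le w(E_t)\lesssim \big(t/\langle w\rangle_{Q_0}\big)^{-\gamma}\,w(Q_0)$ for all $t\ge\langle w\rangle_{Q_0}$, where $\gamma=\log_c(1/\beta)>0$ depends only on $n,p,[w]_{A_p}$.

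Finally I would feed this into the layer-cake identity $\int_{Q_0} w^{1+\varepsilon}=\varepsilon\int_0^\infty t^{\varepsilon-1}\,w(\{x\in Q_0:w(x)>t\})\,dt$, splitting at $\langle w\rangle_{Q_0}$: the part over $(0,\langle w\rangle_{Q_0})$ is at most $w(Q_0)\langle w\rangle_{Q_0}^{\varepsilon}$, while the part over $(\langle w\rangle_{Q_0},\infty)$ converges \emph{provided} $\varepsilon<\gamma$ and is then $\lesssim \tfrac{1}{\gamma-\varepsilon}\,w(Q_0)\langle w\rangle_{Q_0}^{\varepsilon}$. Choosing $\varepsilon=\gamma/2$ and dividing by $|Q_0|$ gives $\langle w^{1+\varepsilon}\rangle_{Q_0}\lesssim \langle w\rangle_{Q_0}^{1+\varepsilon}$; taking $(1+\varepsilon)$-th roots and setting $s:=1+\varepsilon>1$ yields $\langle w^s\rangle_{Q_0}^{1/s}\lesssim\langle w\rangle_{Q_0}$ uniformly in $Q_0$, i.e.\ $w\in RH_s$ with $s$ and $[w]_{RH_s}$ depending only on $n,p,[w]_{A_p}$. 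The main obstacle is the decay step of the middle paragraph: one must pick $c$ so that the $A_\infty$ property applies on each Calderón--Zygmund cube, keep the dyadic nesting straight, and carry out the interpolation over dyadic levels cleanly; the layer-cake computation is then routine, but must be done with $\varepsilon$ chosen strictly below the decay exponent $\gamma$.
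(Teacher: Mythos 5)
Your proposal is correct: it is the classical Calder\'on--Zygmund stopping-time and layer-cake proof of the reverse H\"older inequality, with the $A_\infty$ self-improvement correctly extracted from Lemma~\ref{lemma: GeneralDoublingCon} and all the quantitative steps (the choice $c=2^{n+1}$, the decay exponent $\gamma$, and the restriction $\varepsilon<\gamma$) handled properly. The paper itself gives no proof of Lemma~\ref{lemma:Ainfty_RHS}, citing \cite{JavierDuBook,grafakosmodern} instead, and your argument is essentially the standard proof found in those references, so nothing needs to be added.
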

	

	Finally, for $1\leq p<\infty$, $W^{1,p}\left(\Omega,w\right)$ is a Banach
      space (see \cite[Proposition 2.1.2]{Turesson}). Moreover, when
      $w\in A_p$, $W^{1,p}\left(\Omega,w\right)$ is the
      completion of
      $C^{\infty}(\Omega)\cap W^{1,p}\left(\Omega,w\right)$ with
      respect to the norm \eqref{def:Sobolev} (see ~\cite[Theorem
      2.5]{Tero}).

      \subsection{Variable exponent spaces}
            In this section, we recall the  definition of variable Lebesgue and Sobolev spaces.
      For further information on these spaces, 
      see~\cite{cruz-fiorenza-book,diening-harjulehto-hasto-ruzicka2010}.
      Let $\Pp(\Omega)$ denote the collection of all measurable exponent
      functions  $\pp: \Omega \to [1,\infty)$.  
      Denote the essential supremum and infimum of $\pp$ on a set
      $E\subseteq \subRn$ by $p_{+}(E)$ and $p_{-}(E)$,
      respectively. For the sake of simplicity, we write $p_{-}$ and
      $p_{+}$ when $E=\subRn$.  Hereafter, we will generally assume that
      $p_+<\infty$.   Recall the definition of the
      norm~\eqref{eqn:var-norm} and modular~\eqref{eqn:var-modular}
      associated to the exponent $\pp\in \Pp(\Omega)$.    The variable Lebesgue space
      $L^{p(\cdot)}(\Omega)$ is the collection of all measurable
      functions such that $\|f\|_{\Lp(\Omega)}<\infty$.  When there is
      no confusion about $\Omega$, we will sometimes write $\|f\|_\pp$
      for the norm. 

       Given $\pp\in \mathcal{P}(\Omega)$, we say that $\pp$ satisfies the
       local log-H\"older  continuity condition, and denote this by
       $\pp \in LH_{0}(\Omega)$  if there exists a constant $C_0$ such that
	\begin{equation}\label{eq:local_log_Holder}
		|p(x)-p(y)|\leqslant \frac{C_0}{-\log(|x-y|)},\quad x,y \in \Omega,\quad |x-y|<1/2,
	\end{equation}
	and we say that $\pp$ is  log-H\"older continuous at infinity,
        denoted by $\pp\in LH_{\infty}(\Omega)$ if there exist $p_{\infty}$ and $C_{\infty}>0$ such that
	\begin{equation}\label{eq:infinity_log_Holder}
		|p(x)-p_{\infty}|\leqslant \frac{C_{\infty}}{\log(e+|x|)},\quad x\in\Omega.
              \end{equation}
              When $\pp$ satisfies \eqref{eq:local_log_Holder} and
              \eqref{eq:infinity_log_Holder} we say that $\pp$ is a
              log-H\"older continuous function and denote this by
              $\pp \in LH(\Omega)$. It is well known that the $LH$
              condition is a natural assumption in the variable
              exponent setting. For instance, $LH$ regularity is
              sufficient for the maximal operator to be bounded on
              $L^{p(\cdot)}(\subRn)$, see,  for example,~
              \cite{cruz-fiorenza-book,diening-harjulehto-hasto-ruzicka2010}.

        Given a set $E$,  $0<|E|<\infty$, the harmonic mean of $\pp$ on $E$ is given by 
	\begin{equation*}
		\frac{1}{p_{E}}= \frac{1}{|E|}\int_E \frac{\dif
                  x}{p(x)}. 
	\end{equation*}

        Given a function $f\in
        L^{p(\cdot)}(\Omega)$, it belongs to the Sobolev space
        $W^{1,p(\cdot)}(\Omega)$ if its weak derivatives $D_{j}f$,
        $1\leqslant j\leqslant n$, exist and belong to
        $L^{p(\cdot)}(\Omega)$.  $W^{1,p(\cdot)}(\Omega)$  is a Banach space when
        equipped with the norm
        $\|f\|_{W^{1,p(\cdot)}(\Omega)}=\|f\|_{L^{p(\cdot)}(\Omega)}+\|\nabla
        f\|_{L^{p(\cdot)}(\Omega)}$ (see~\cite[Theorem~6.6]{cruz-fiorenza-book}).
        Though $C^{\infty}(\Omega)\cap
        W^{1,p(\cdot)}(\Omega)$ need not be dense in
        $W^{1,p(\cdot)}(\Omega)$ for general functions $\pp\in
        \mathcal{P}(\Omega)$, it is  when $\pp\in LH(\Omega)$, cf.~\cite[Theorem~6.14]{cruz-fiorenza-book}).

	\subsection{Weighted Riesz bounded variation spaces}
        For  $f: \Omega \to \subR$,  its oscillation on a set  $E\subset \Omega$ is defined as
	\[
	\osc_{E}(f):= \sup_{x,y\in E}|f(x)-f(y)|.    
	\]
	Given a weight $w$ and $1\leq p<\infty$,  define the weighted
        Riesz $p$-variation of $f$ on the set $\Omega$ by
	\begin{equation}\label{def:Vpqspaces}
          V_{p}\left(f;\Omega,w\right)
          \coloneqq \sup \left[\sum_{B_k \in \D}
            \left(\frac{\osc_{B_k}(f)}{r_{k}}\right)^{p}w(B_k)\right]^{1/p},
	\end{equation}
	where the supremum is taken over all the countable collections
        $\D=\{B_k\}_{k=1}^\infty$ of disjoint balls of radius $r_{k}$ contained in
        $\Omega$. If $V_{p}\left(f;\Omega,w\right)<\infty$, we say
        that $f\in RBV^{p}\left(\Omega,w\right)$. 

\begin{remark} When $w=1$, 
        since
        $|B|\approx r^{n}$,  we recover the definition of Riesz variation given in
        ~\cite{Barza-Lind2015},  up to a dimensional constant.
      \end{remark}

       For
      functions $f$ and $g$, and $\alpha \in \R$, by Minkowski's inequality for sequence spaces, we have 
      \[ V_{p}(f+g; \Omega,w) \leq  V_{p}(f; \Omega,w)
        +  V_{p}(g; \Omega,w),
\]
        and
  \[      V_{p}(\alpha f; \Omega,w)
        = |\alpha| V_{p}(f; \Omega,w); \]
      hence         $V_{p}(f;\Omega,w)$ is a seminorm.  It is not a norm, since
        $V_{p}(f;\Omega,w)=0$ for every constant function
        $f$.

			\begin{lemma}\label{prop:Embedding-RBVp}
				Given $w$ a weight, assume that $\Omega$ is a bounded set. If $1<p_{1}<p_2<\infty$, then
				\[
				RBV^{p_2}\left(\Omega,w\right) \hookrightarrow RBV^{p_1}\left(\Omega,w\right).
				\]
                              \end{lemma}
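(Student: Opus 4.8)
The plan is to prove the embedding in the quantitative form
\[
V_{p_1}\left(f;\Omega,w\right)\;\le\; w(\Omega)^{\frac{p_2-p_1}{p_1 p_2}}\, V_{p_2}\left(f;\Omega,w\right)
\]
for every measurable $f:\Omega\to\R$, which gives the continuous inclusion once we observe that $w(\Omega)<\infty$. The argument is a single application of H\"older's inequality for series, exploiting that a family of \emph{disjoint} balls inside $\Omega$ has total $w$-measure controlled by $w(\Omega)$.

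First I would fix a countable collection $\D=\{B_k\}_{k=1}^\infty$ of pairwise disjoint balls of radius $r_k$ contained in $\Omega$, and estimate the sum appearing in the definition of $V_{p_1}$. Since $\tfrac{p_2}{p_1}>1$ has conjugate exponent $\bigl(\tfrac{p_2}{p_1}\bigr)'=\tfrac{p_2}{p_2-p_1}$, I split each summand as
\[
\left(\frac{\osc_{B_k}(f)}{r_{k}}\right)^{p_1} w(B_k)
=\left[\left(\frac{\osc_{B_k}(f)}{r_{k}}\right)^{p_2} w(B_k)\right]^{p_1/p_2}\cdot w(B_k)^{\frac{p_2-p_1}{p_2}},
\]
and apply H\"older's inequality with exponents $\tfrac{p_2}{p_1}$ and $\tfrac{p_2}{p_2-p_1}$ to obtain
\[
\sum_{B_k\in\D}\left(\frac{\osc_{B_k}(f)}{r_{k}}\right)^{p_1} w(B_k)
\le\left[\sum_{B_k\in\D}\left(\frac{\osc_{B_k}(f)}{r_{k}}\right)^{p_2} w(B_k)\right]^{p_1/p_2}
\left(\sum_{B_k\in\D} w(B_k)\right)^{\frac{p_2-p_1}{p_2}}.
\]
The first factor on the right is at most $V_{p_2}\left(f;\Omega,w\right)^{p_2}$ by definition of the weighted Riesz $p_2$-variation. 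For the second factor, disjointness of the $B_k$ and $\bigcup_k B_k\subseteq\Omega$ give $\sum_k w(B_k)=w\!\left(\bigcup_k B_k\right)\le w(\Omega)$, and since $\Omega$ is bounded its closure is compact, so local integrability of the weight yields $w(\Omega)\le w(\overline{\Omega})<\infty$.

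Combining these bounds and taking $p_1$-th roots gives
\[
\left[\sum_{B_k\in\D}\left(\frac{\osc_{B_k}(f)}{r_{k}}\right)^{p_1} w(B_k)\right]^{1/p_1}
\le w(\Omega)^{\frac{p_2-p_1}{p_1 p_2}}\, V_{p_2}\left(f;\Omega,w\right),
\]
and taking the supremum over all admissible collections $\D$ yields the asserted inequality; in particular, if $V_{p_2}\left(f;\Omega,w\right)<\infty$ then $V_{p_1}\left(f;\Omega,w\right)<\infty$, so $RBV^{p_2}\left(\Omega,w\right)\hookrightarrow RBV^{p_1}\left(\Omega,w\right)$. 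There is no real obstacle here: the only place the hypotheses enter is the finiteness of $w(\Omega)$, which is exactly where the boundedness of $\Omega$ is used, and the rest is the choice of the H\"older exponents. I would add the remark that $p_1>1$ plays no role in this proof—the same argument works for any $1\le p_1<p_2<\infty$.
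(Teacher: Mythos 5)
Your proof is correct and follows essentially the same route as the paper's: both split each summand and apply H\"older's inequality for series with exponents $p_2/p_1$ and its conjugate (the paper phrases this via an auxiliary $s$ with $1/p_2+1/s=1/p_1$, which is exactly $s/p_1=(p_2/p_1)'$), and then bound $\sum_k w(B_k)\le w(\Omega)<\infty$ using disjointness of the balls and the boundedness of $\Omega$. Your explicit constant $w(\Omega)^{(p_2-p_1)/(p_1p_2)}$ and the remark that $p_1=1$ also works are fine, harmless refinements of the same argument.
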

                              
			\begin{proof}
                          Fix $f \in RBV^{p_2}\left(\Omega,w\right)$
                          and let $\D=\{B_k\}_{k=1}^\infty$ be a
                          disjont collection of countable balls
                          contained in $\Omega$.  Fix $s>1$ such
                          that $1/p_2+1/s=1/p_1$. Then, by H\"older's
                          inequality,
				\begin{align*}
					\sum_{B_k \in \D
                                  }\left(\frac{\osc_{B_k}(f)}{r_k}\right)^{p_1}\!\!\!w(B_k)
                                  &\leqslant \left(\sum_{B_k \in \D
                                    }{\left(\frac{\osc_{B_k}(f)}{r_k}\right)^{p_2}\!\!\!w(B_k)}\right)^{\frac{p_1}{p_2}}
\!                                    \left(\sum_{B_k \in \D }\!\!w(B_{k})\right)^{\frac{p_1}{s}}\\
					& \leqslant
                                   V_{p_2}\left(f;\Omega,w\right)^{p_{1}}w(\Omega)^{\frac{p_1}{s}}\\
                                  & <\infty.
				\end{align*}
			If we take the supremum over all such
                        collections $\D$, we get the desired embedding.
			\end{proof}

		
		\section{Proof of
                  Theorems~\ref{theo:RBVpq-W1p},~\ref{prop:Main-Proposition},
                and~\ref{thm:embedding-n-variation-sobolev}}\label{sec:proof main theorem} 
		
		\begin{proof}[Proof of Theorem \ref{theo:RBVpq-W1p}]
                  Fix $w\in A_p$, $p>nr_w$.  We first prove that if
                  $f \in W^{1,p}(\Omega,w)$, then
                  $f\in RBV^p(\Omega,w)$.  To do so, we prove
                  a weighted version of the well-known \emph{Morrey
                    inequality} (see \cite[Theorem
                  9.12]{BrezisBook}).  Suppose that
                  $f\in C^{\infty}(\Omega)\cap W^{1,p}(\Omega,w)$. 
                  Fix $x_0\in \Omega$ and $R>0$ such that the open ball
                  $B(x_0,2R)$ is contained in
                  $\Omega$.   Fix $y,z\in  B(x_0,R)$  and let  $B_r= B(x,r)
                  \subset B(x_0,2R)$ be any ball containing
                  $y$ and $z$.    Then
			\begin{equation*}
				f(z)-f(y)=\int_{0}^{1}\nabla f(tz-(1-t)y) \cdot (z-y)\dif t.
			\end{equation*}
			Observe that $B(tz+(1-t)y,tr)\subset B(x,r)$
                        and $|z_j-y_j|\leqslant r$.  Since $p>nr_w$,
                        we can fix $\delta>0$ such that
                        $n<1+\delta< \frac{p}{r_w}$; therefore, if
                        $q=\frac{p}{1+\delta}$, then $nr_w<nq < p$.
                        Hence, by Fubini's theorem, a change of
                        variables, and Hölder's inequality applied
                        twice, we have 
			\begin{align*}
                          &|f(z)-\langle f\rangle_{B_{r}}|
                          \leqslant \avgint_{B_{r}}\int_{0}^{1}\sum_{j=1}^{n}\abs{D_{j} f(tz-(1-t)y)}|z_{j}-y_j|\dif t\dif y\\
				&\lesssim
				\sum_{j=1}^{n}\frac{1}{r^{n-1}}\int_{0}^{1}
				\left(\int_{B_r}|D_jf(y)|^{1+\delta}\dif y\right)^{1/(1+\delta)}|B(tx+(1-t)y,tr)|^{\delta/(1+\delta)} \frac{\dif t}{t^{n}}\\
				&\lesssim
				\sum_{j=1}^{n}r^{1-\frac{n}{1+\delta}}\int_{0}^{1}\left(\int_{B_r}\abs{D_{j}f(y)}^{1+\delta}\dif y\right)^{1/(1+\delta)} \frac{\dif t}{t^{n/(1+\delta)}}\\
				&\lesssim r^{1-\frac{nq}{p}}\sum_{j=1}^{n}\left(\int_{B_r}|D_{j}f(y)|^{p}w(y)\dif y\right)^{1/p}\left(\int_{B_r}w^{1/(1-q)}(y)\dif y\right)^{(q-1)/p}.
			\end{align*}
                        Note that in the final estimate we use that
                        $n<1+\delta$ to evaluate the integral.  
			Hence, if we  set $\sigma := w^{1/(1-q)}$, the preceding estimate for $ z,y \in B_r $ leads to
			\begin{multline*}
 |f(z)-f(y)| \leqslant |f(z)-f_{B_r}|+|f(y)-f_{B_r}| \\
 \lesssim  r^{1-n\frac{q}{p}}\sigma(B_r)^{(q-1)/p}\|\nabla
 f\|_{L^{p}(B_r,w)}
 \leq  r^{1-n\frac{q}{p}}\sigma(B_{2R})^{(q-1)/p}\|\nabla f\|_{L^{p}(B_R,w)}.
\end{multline*}
Moreover, since $w\in A_q$, then $\sigma(B_R)<\infty$.  This is true for
all such balls $B_r$, so if we fix the ball $B(x,r)$ so that
$r=2|z-y|$, we get that
\begin{equation} \label{eq:morrey1}
 |f(z)-f(y)| \leq
  C|x-y|^{1-n\frac{q}{p}}\sigma(B_{2R})^{(q-1)/p}\|\nabla
  f\|_{L^{p}(B_R,w)}. 
\end{equation}
Since $ C^{\infty}(\Omega)\cap W^{1,p}(\Omega,w)$ is dense in
$W^{1,p}(\Omega,w)$ (see ~\cite[Theorem 1]{gol2009weighted}), a
standard argument shows that inequality~\eqref{eq:morrey1} holds for
every $f\in W^{1,p}\left(\Omega,w\right)$ and almost every
$y,\,z\in B(x_0,R)$.  Consequently, arguing as in the proof
of~\cite[Theorem 9.12]{BrezisBook}, we get that $f$ has a continuous
representative (that is to say, by redefining $f$ on a null set, we
have that $f$ is continuous on $B(x_0,R)$).  Since this is true for all
$x_0\in \Omega$, we have that $f$ has a continous representative on $\Omega$.

To estimate the $RBV^p(\Omega,w)$ norm of $f$, fix any collection
$\D=\{B_k\}_{k=1}^\infty$ of disjoint balls contained in $\Omega$.
Since $w\in A_q$,
			\[ 
			w(B_k)\sigma(B_k)^{q-1}\lesssim r_k^{nq}.  
			\]     
                        Therefore, by \eqref{eq:morrey1} we have that
			\begin{align*}
				\sum_{B_k
                          }{\left(\frac{\osc_{B_k}(f)}{r_k}\right)^{p}w(B_k)}
                          &=\sum_{B_k }{\left(\frac{\osc_{B_k}(f)}{r_k}\right)^{p}w(B_k)\frac{\sigma(B_k)^{q-1}}{\sigma(B_k)^{q-1}}}\\
				&\lesssim \sum_{B_k}\frac{\osc_{B_k}(f)^{p}}{r_k^{p-qn}}\frac{1}{\sigma(B_k)^{q-1}} \\
				& \lesssim \sum_{B_k}\|\nabla f\|_{L^{p}(B_k,w)}^{p} \\
				&\lesssim \|\nabla
                           f\|_{L^{p}(\Omega,w)}^{p}. 
			\end{align*}   
			If we take the supremum over all such
                        collections $\D$, we get that
                        $V_{p}(f;\Omega,w) \lesssim\|\nabla
                        f\|_{L^{p}(\Omega,w)}$. 
                        
			\medskip

                        To prove the converse, suppose $f\in
                        RBV^p(\Omega,w)$.  
                       We will adapt the construction in
                        \cite[Theorem~1.1]{Barza-Lind2015}.  Fix an open set
                        $\Omega_{0}\Subset\Omega$ and fix
                        \[
			0< R<\frac{d\left(\Omega_{0},\Omega^c\right)}{6\sqrt{n}}.
			\]
                        Let $\{\alpha_{k}\}_{k=1}^\infty$ be an
                        enumeration of the lattice
                        $2R\mathbb{Z}^{n}$. Form an index set
                        $K\in \N$ so that the collection of cubes
                        $\D=\{Q_k\}_{k\in K}$ consists of all cubes
                        such that each cube $Q_k$ has center
                        $\alpha_{k}$, side length $2R$, and satisfies
                        $Q_{k}\cap \Omega_{0}\neq \emptyset$.  We
                        claim that
                        \[ \Omega_ 0 \subset \bigcup_{k\in K}{Q_k}\subset
                          \Omega. \]
                        The first inclusion is immediate.  If the
                        second did not hold, then for some $Q\in \D$
                        we would have that
                        $Q\cap \Omega^{c}\neq \emptyset$. Hence, there
                        exists $x\in Q\cap \Omega_{0}$ and
                        $y\in Q\cap \Omega^{c}$ such that
			\[
			d\left(\Omega_{0},\Omega^{c}\right)\leqslant |x-y|\leqslant 2\sqrt{n}R,
			\]       
			which is a contradiction. Denote by
                        $\B=\{B(\alpha_{k},3\sqrt{n}R)\}_{k\in K}$ the
                        collection of balls with center $\alpha_{k}$
                        and radius $3\sqrt{n}R$, and observe that,
                        arguing as before, we have that 
                        $Q_{k}\subset B(\alpha_{k}, 3\sqrt{n}R) \subset
                        \Omega$.   Since the centers of the balls in
                        $\B$ lie on a lattice and they have uniform
                        radii, the collection $\B$ can be
                        partitioned into  $N$ collections
                        $\{\B_{i}\}_{i=1}^{N}$ of disjoint balls,
                        $\B_i\subseteq \B$, where $N$ depends only on
                        the dimension.  For brevity, let
                        $B_k=B(\alpha_{k}, 3\sqrt{n}R)$.

                        Let $\varphi \in C_c^\infty(B(0,1))$ be
                        non-negative, radially descreasing function such that $\|\varphi\|_1=1$.
                        For $R>0$ define the sequence of mollifiers
                        $\varphi_{R}(x)=\frac{1}{R^{n}}\varphi(x/R)$.
                       For all 
                        $x\in Q_k$, $y\in B(0,R)$ implies $x-y\in
                        B_k$.    Moreover, we have that for $1\leq j
                        \leq n$, $D_j \varphi$ is uniformly bounded
                        and has $\int_{B(0,1)} D_j\varphi(x)\dif x =
                        0$  (the latter is well-known and follows,
                        for instance,
                        from the divergence theorem).    Therefore,
                        for each $j$ we can estimate as follows:
			\begin{align*}
				\norm{D_{j}
                          (\varphi_{R}*f)}_{L^{p}(\Omega_{0},w)}^{p} &\leqslant
                                                                      \sum_{k\in
                                                                      K}\int_{Q_{k}}|D_{j}\varphi_{R}*f(x)|^{p}w(x)\dif{x}\\
				& = \sum_{k\in K}\int_{Q_{k}}\abs{\int_{B(0,R)}D_{j}\varphi_{R}(y)(f(x-y)-f(x))\dif{y}}^{p}w(x)\dif{x}\\
				&\lesssim \sum_{k\in K}\int_{Q_{k}}R^{-(n+1)p}(\osc_{B_{k}}(f))^{p}|B(0,R)|^{p}w(x)\dif{x}\\
				&\lesssim \sum_{k\in K}\left(\frac{\osc_{B_k}(f)}{3\sqrt{n}R}\right)^{p}w(Q_{k}) \\
				& \leqslant \sum_{k\in K}\left(\frac{\osc_{B_k}(f)}{3\sqrt{n}R}\right)^{p}w(B_{k})\\
				& = \sum_{i=1}^{N}\sum_{B_{k}\in
                           \B_i}\left(\frac{\osc_{B_k}(f)}{3\sqrt{n}R}\right)^{p}w(B_{k}) \\
				& \lesssim V_{p}(f;\Omega,w)^{p}, 
			\end{align*} 
                        where the last inequality follows since the
                        balls in $\B_i$ are disjoint.

Because $w\in A_p$, the Hardy-Littlewood maximal operator is bounded on
$L^p(\Omega,w)$,  which yields $\|\varphi_R*f\|_{L^p(\Omega_0,w)} \lesssim \|f\|_{L^p(\Omega,w)}$,  since $|\varphi_R*f(x)| \lesssim Mf(x)$.  
                        Thus,
                        $\{\varphi_{R}*f\}$ is a bounded sequence in
                        $W^{1,p}\left(\Omega_{0},w\right)$ 
			which converges pointwise almost everywhere to
                        $f$.  Thus, by Fatou's lemma, we have
                        that
                        \[ \|\nabla f\|_{L^p(\Omega_0,w)} \lesssim
                          V_{p}(f;\Omega,w). \]
                        Since the implicit constants do not depend on
                        $\Omega_0$, we can again apply Fatou's lemma
                        to get that the left-hand side 
                        inequality in ~\eqref{eq:Main_Inequality} holds.
                        Since we also have that $f\in
                        L^p(\Omega,w)$, it follows that  $f\in W^{1,p}\left(\Omega,w\right)$.  
 \end{proof}
                      
		\begin{proof}[Proof of Corollary~\ref{cor:RBV-embed}]
                  For this result it suffices to note that in the proof of the left-hand
                  side inequality in \eqref{eq:Main_Inequality} we did
                  not use the $A_{p}$ condition and this estimate
                  holds for any weight $w$ and for all $p$.  
                \end{proof}

		\begin{proof}[Proof of Theorem \ref{prop:Main-Proposition}]
                  Fix $t>0$.  By the definition of $L_{f}$, for any $x$ for which 
                  $L_{f}(x)>t$ there exists $\Delta_{x}\in \subRn$, $|\Delta_x|\leqslant 1$, such that
			\begin{equation}\label{eq:weak-Inequality}
				|f(x+\Delta_x)-f(x)| \geqslant t|\Delta_x|.
			\end{equation}    
                        Since the balls $\{ B(x,|\Delta_x|)\}$ cover the
                        set $\{ x : L_f(x)>t\}$ and have uniformly
                        bounded radii, by the Vitali
                        covering lemma there exists a subcollection
                        of disjoint balls $\{B_{k}\}_{k=1}^\infty$,
                        $B_{k}:=B(x_{k},|\Delta_{x_{k}}|)$, such that
			\[
			\{x\in \subRn : L_{f}(x)>t\}\subset \bigcup_{k} 5B_{k}.
			\]  
			By Lemma \ref{lemma: GeneralDoublingCon}
                        (applied to balls instead of cubes) we have
			\[
			\left(\frac{|B_{k}|}{|5B_{k}|}\right)^{p}\lesssim \frac{w(B_{k})}{w(5B_{k})}.
			\]       
			Hence,
			\begin{equation}\label{eq:weak-Inequality2}
w(\{x\in \subRn :  L_{f}(x)>t\}) \leq
w\bigg(\bigcup_{k}5B_{k}\bigg) \leq \sum_{k}w(5B_{k}) \lesssim \sum_{k}w(B_{k}).
\end{equation}
			Set $r_{k}=|\Delta_{x_k}|$. If we combine
                        \eqref{eq:weak-Inequality} and
                        \eqref{eq:weak-Inequality2} we get that 
			\begin{multline*}
                          t^{p}w(\{x\in\subRn : L_{f}(x)>t\})
                          \lesssim t^p\sum_{k}w(B_{k}) \lesssim 
                          \\ \lesssim
                          \sum_{k}\left(\frac{\osc_{B_k}(f)}{r_{k}}\right)^{p}w(B_{k})
                          \leqslant V_{p}(f;\Omega,w)^p,
			\end{multline*}
		which is the desired inequality.	
              \end{proof}

 \begin{proof}[Proof of Corollary~\ref{cor:doubling}]
 To see that this result is true, it suffices to note that in the proof of
 Theorem~\ref{prop:Main-Proposition}, we only use the $A_p$ condition
 to invoke Lemma~\ref{lemma: GeneralDoublingCon}, which in turn we
 only use to show that for each $k$, $w(5B_k)$ is uniformly bounded by $w(B_k)$. 
 Thus, it is enough to assume doubling.
\end{proof}

\begin{proof}[Proof of  Corollary~\ref{cor:Almost-Differentiability-W1p}]
Fix $f\in RBV^p(\Omega,w)$; then   by
Proposition~\ref{prop:Main-Proposition},  $|\{x\in \subRn : L_{f}(x)=\infty\}|=0$.
Given any bounded set $E$,
\[ |E| = \int_E w(x)^{1/p}w(x)^{-1/p} \dif x
  \leq \bigg(\int_E w(x)\dif x\bigg)^{1/p}
  \bigg(\int_E w(x)^{1-p'}\dif x\bigg)^{1/p'}; \]
since $w\in A_p$ the second integral is finite.  Therefore, given any
set $E$,  if
$w(E)=0$, it follows by a standard approximation argument that
$|E|=0$.   In particular, we have that $|\{x\in \subRn :
L_{f}(x)=\infty\}|=0$, and so by 
Stepanov's theorem \cite[Theorem 3.1.9]{Federer1969}, $f$ is differentiable almost everywhere.

If $p>nr_w$ and $f\in W^{1,p}(\Omega,w)$, then by Theorem
\ref{theo:RBVpq-W1p}, $f\in RBV^p(\Omega,w)$ and so again is
differentiable almost everywhere. 
\end{proof}

\begin{remark}
  By Corollary~\ref{cor:doubling}, if $w$ is doubling and mutually
  absolutely continuous with respect to Lebesgue measure, then $f\in
  RBV^p(\Omega,w)$ is again differentiable almost everywhere. 
\end{remark}

\begin{proof}[Proof of
  Theorem~\ref{thm:embedding-n-variation-sobolev}]
Given a weight $w$ and $f\in RBV^n(\Omega,w)\cap L^n(\Omega,w)$,  by
Corollary~\ref{cor:RBV-embed}, we have that  $f\in W^{1,n}(\Omega,w)$ and
the gradient estimate holds.  If $w\in A_n$, then by
Theorem~\ref{prop:Main-Proposition}, $f$ is differentiable almost
everywhere.
\end{proof}

                \section{Proof of
                  Theorem~\ref{theo:Riesz_Variable-Exponent}
                }\label{sec:Variable-expoent-section}
			
                In this section, we prove
                Theorem~\ref{theo:Riesz_Variable-Exponent}, which
                  characterizes the  variable exponent
                Sobolev space $W^{1,p(\cdot)}(\Omega)$ in terms of
                the  variable exponent Riesz bounded variation spaces
                $RBV^{p(\cdot)}(\Omega)$.   We first give a formal
                definition of these spaces.  Given a  countable
                collection of disjoint balls $\D=\{B_k\}_{k=1}^\infty$ of radius $r_{k}$ contained in
                $\Omega$, define
			\begin{equation*}
                          V_{\D}^{p(\cdot)}(f;\Omega)
                          =\sum_{k}\left(\frac{\osc_{B_{k}}(f)}{r_k}\right)^{p_{B_k}}
                          \|\chi_{B_{k}}\|_{p(\cdot)}^{p_{B_k}}.  
                              \end{equation*}
Use the Luxemburg norm to define the functional 
			\begin{equation} \label{eq:def_RBV_variable}
                          \|f\|_{RBV_{\D}^{p(\cdot)}(\Omega)}
                          = \inf\left \{ \lambda>0 :  V_{\D}^{p(\cdot)}(f/\lambda;\Omega)\leqslant 1 \right \}.
                        \end{equation}
                        A function $f$ belongs to $RBV^{p(\cdot)}(\Omega)$ if 
			\[
			\|f\|_{RBV^{p(\cdot)}(\Omega)}=\sup_\D\|f\|_{RBV_{\D}^{p(\cdot)}(\Omega)}< \infty,
                      \]
                      where the supremum is taken over all the
                      countable collections $\D$.   While not central
                      to our results, we have that
                      $\|\cdot\|_{RBV^{p(\cdot)}(\Omega)}$ is a
                      seminorm.  This can be proved using the standard
                      arguments used with the Luxemburg construction
                      to prove that the norms in Orlicz spaces and
                      variable Lebesgue spaces have the requisite
                      properties, 
                      see~\cite{cruz-fiorenza-book,MR1113700}.  
The fact that 
                      $\|\cdot\|_{RBV^{p(\cdot)}(\Omega)}$ is a
                      only  a seminorm is immediate, since 
                      $\|f\|_{RBV^{p(\cdot)}(\Omega)}=0$ whenever 
                      $f$ is a constant function. Finally, note that when
                      $\pp=p$ is constant, $1\leq p<\infty$, 
                      $ \|f\|_{RBV^{p(\cdot)}(\Omega)}=
                      V_{p}(f;\Omega)$, so the spaces
                      $RBV^\pp(\Omega)$ and $RBV^p(\Omega)$ are
                      equal.

                      To prove our main results in this section, we
                      first prove an alternative way to estimate the
                      seminorm in $RBV^\pp(\Omega)$.  Given an exponent
                      function $\pp$, a disjoint
                      collection $\D=\{B_k\}_{k=1}^\infty$ of balls
                      contained in $\Omega$, and a sequence
                      of real numbers $\{t_{B_{k}}\}_{B_{k}\in \D}$
                      associated to $\D$,  define the variable
                      exponent sequence space
			\begin{equation*}
                          \ell^{\D,\pp}
                          = \bigg \{\hat{t}=\{t_{B_{k}}\}_{B_{k}\in
                              \D} :
                            \sum_{B_k \in \D }|t_{B_k}|^{p_{B_{k}}}<\infty    \bigg \},
			\end{equation*}  
			and endow it with the Luxemburg norm    
			\[
                          \|\hat{t}\|_{\ell^{\D,\pp}}
                          =\inf \bigg \{ \lambda>0 :
                            \sum_{B_k \in \D }(|t_{B_k}|/\lambda)^{p_{B_{k}}}\leqslant 1 \bigg \},
                          \]
                          for more on variable sequence spaces,
                          see~\cite{diening-harjulehto-hasto-ruzicka2010}.  
                          The following result shows the connection
                          between these sequence spaces and the
                          variable Lebesgue space $L^\pp(\Omega)$
                          (for a proof,
                          see~\cite[Lemma~2.1]{kopaliani2008greediness}).
                          
			\begin{lemma}\label{theo:sequence_Lp_norm_ineq}
                          Let $\pp \in LH(\Omega)$ and
                          $\{e_{B_{k}}\}_{B_{k}\in \D}$ be the
                          canonical base for $\ell^{\D,\pp}$ (i.e., $e_{B_k}$
                          has entry 1 at the index $k$, and $0$
                          otherwise). Then for every disjoint
                          countable collection $\D$ of balls and every
                          real sequence $\{t_{B_k}\}$,
				\begin{equation}
				\bigg\|\sum_{B_k \in \D
                                          }t_{B_{k}}\chi_{B_{k}}\bigg\|_{p(\cdot)}
                                        \approx \bigg\|\sum_{B_k \in \D
                                          }t_{B_k}
                                          \|\chi_{B_k}\|_{p(\cdot)}e_{B_k}\bigg\|_{\ell^{\D,p}}.
				\end{equation}
                                The implicit constants depend only on $\pp$.
                              \end{lemma}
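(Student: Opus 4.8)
The plan is to deduce the asserted equivalence from a single two-sided ``localization'' estimate for the modular and then to unwind the two Luxemburg norms mechanically. Throughout I would use the \emph{unit ball property}, available because $p_+<\infty$: for $h\in\Lp(\Omega)$ one has $\|h\|_{p(\cdot)}\le1$ iff $\rho_{p(\cdot),\Omega}(h)\le1$, and for $\hat t=\{t_{B_k}\}$ one has $\|\hat t\|_{\ell^{\D,\pp}}\le1$ iff $\sum_{B_k\in\D}|t_{B_k}|^{p_{B_k}}\le1$ (see~\cite{cruz-fiorenza-book,diening-harjulehto-hasto-ruzicka2010}). By homogeneity of both norms it then suffices, for a fixed disjoint family $\D=\{B_k\}$ and a fixed real sequence $\{t_{B_k}\}$, to compare $\rho_{p(\cdot),\Omega}\big(\sum_k t_{B_k}\chi_{B_k}\big)$ with $\sum_k\big(|t_{B_k}|\,\|\chi_{B_k}\|_{p(\cdot)}\big)^{p_{B_k}}$. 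Since the $B_k$ are pairwise disjoint and contained in $\Omega$,
\[
\rho_{p(\cdot),\Omega}\Big(\sum_{B_k\in\D}t_{B_k}\chi_{B_k}\Big)=\sum_{B_k\in\D}\int_{B_k}|t_{B_k}|^{p(x)}\,\dif x ,
\]
so the whole question reduces to controlling one integral $\int_B\tau^{p(x)}\,\dif x$ over a single ball, with $\tau=|t_{B_k}|\ge0$.

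The heart of the matter — and the only genuine obstacle — is the \emph{local key estimate}: for $\pp\in LH(\Omega)$ there is a constant $A\ge1$ depending only on $\pp$ and a fixed nonnegative $h\in L^1(\R^n)$ depending only on $\pp$ and $n$ such that, for every ball $B\subset\Omega$ and every $\tau\ge0$ in the regime relevant to a unit-modular normalization,
\[
A^{-1}\big(\tau\,\|\chi_B\|_{p(\cdot)}\big)^{p_B}-\int_B h\,\dif x\;\le\;\int_B\tau^{p(x)}\,\dif x\;\le\;A\big(\tau\,\|\chi_B\|_{p(\cdot)}\big)^{p_B}+\int_B h\,\dif x .
\]
This is the classical consequence of log-H\"older regularity, and it is exactly what underlies \cite[Lemma~2.1]{kopaliani2008greediness}. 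For balls with $|B|\le1$ one uses \eqref{eq:local_log_Holder} to get $|p(x)-p_B|\le C_0/\log(1/|B|)$ on $B$, hence $\|\chi_B\|_{p(\cdot)}\approx|B|^{1/p_B}$ and $|B|^{|p(x)-p_B|}$ bounded; the normalization keeps the oscillation factor $\tau^{|p(x)-p_B|}$ under control, and the residual deviation is absorbed into the error $\int_B h$. For balls with $|B|\ge1$ one uses \eqref{eq:infinity_log_Holder} to compare $p(x)$ with $p_\infty$ and lets $h$ dominate a multiple of $(e+|x|)^{-np_-}$. The hard part will be carrying this out uniformly over \emph{all} balls of $\Omega$ with an error term of the stated integral-of-a-fixed-$L^1$-function form; the disjointness of $\D$ is then precisely what makes the total error $\sum_k\int_{B_k}h=\int_{\bigcup_k B_k}h\le\|h\|_1$ harmless.

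Granting the key estimate, the two norm inequalities follow by a routine rescaling of the Luxemburg infima. For ``$\lesssim$'', set $s_{B_k}=|t_{B_k}|\,\|\chi_{B_k}\|_{p(\cdot)}$, normalize $\sum_k s_{B_k}^{p_{B_k}}\le1$, apply the upper bound of the key estimate on each $B_k$ after replacing $\tau$ by $\tau/\lambda$, sum, and use the elementary inequality $(a/\lambda)^{p}\le\lambda^{-p_-}a^{p}$ (for $\lambda\ge1$, $p\ge p_-$) together with $\sum_k\int_{B_k}h\le\|h\|_1$ to see that $\rho_{p(\cdot),\Omega}\big(\lambda^{-1}\sum_k t_{B_k}\chi_{B_k}\big)\le1$ for a suitable fixed $\lambda=\lambda(\pp,n)$; the unit ball property then gives $\big\|\sum_k t_{B_k}\chi_{B_k}\big\|_{p(\cdot)}\lesssim\|\hat s\|_{\ell^{\D,\pp}}$. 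The reverse inequality ``$\gtrsim$'' is obtained symmetrically from the lower bound of the key estimate. Since all constants depend only on $\pp$ and $n$, the equivalence holds for each fixed $\D$, so no supremum over $\D$ is needed here. Alternatively, and as the paper does, one may simply invoke \cite[Lemma~2.1]{kopaliani2008greediness} directly.
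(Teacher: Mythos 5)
Your proposal is essentially sound, but note that the paper does not prove this lemma at all: it simply cites \cite[Lemma~2.1]{kopaliani2008greediness}, which is exactly the alternative you mention in your last sentence. What you have written is a sketch of the standard direct argument that underlies Kopaliani's lemma: the reduction via disjointness, homogeneity and the unit-ball property (valid since $p_+<\infty$) to a modular comparison on a single ball is correct, and the passage from the local two-sided estimate to the two norm inequalities by rescaling the Luxemburg infima (using $p_{B_k}\geq p_-$ to absorb a fixed $\lambda\geq 1$, and $\sum_k\int_{B_k}h\leq\|h\|_{L^1}$ by disjointness) is routine and correctly executed. The one point to be careful about is the formulation of your ``local key estimate'': as literally stated, for \emph{every} $\tau\geq 0$, it is false, since the two sides scale with different exponents; it only holds in the normalized regimes you actually use it in, namely $\tau\|\chi_B\|_{p(\cdot)}\lesssim 1$ for the upper bound and $\int_B\tau^{p(x)}\dif x\leq 1$ for the lower bound, and its proof (via $\|\chi_B\|_{p(\cdot)}\approx|B|^{1/p_B}$, the $LH_0$ control of $|B|^{-|p(x)-p_B|}$ on small balls, and the $LH_\infty$ comparison with $p_\infty$ plus an integrable majorant on large balls) is the genuine content of the lemma, which you assert rather than carry out. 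So your write-up is an acceptable outline, but to be self-contained it would need either that estimate proved in detail or, as the paper does, a direct appeal to \cite[Lemma~2.1]{kopaliani2008greediness}; see also \cite{cruz-fiorenza-book,diening-harjulehto-hasto-ruzicka2010} for the standard log-H\"older machinery you are invoking.
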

                              
                              Given a disjoint countable collection of
                              balls $\D=\{B_{k}\}_{k=1}^\infty$, define
                              the operator $G_{\D}$ by
			\begin{equation*}
				G_{\D} f(x)=\sum_{B_k\in \D}\frac{\osc_{B_k}(f)}{r_k}\chi_{B_{k}}(x). 
                              \end{equation*}
                              A key fact is that the 
                              $RBV_{\D}^{p(\cdot)}$ norm of $f$ and the
                              $L^{p(\cdot)}$ norm of the operator  $G_{\D} f$ are
                              comparable.
                              
			\begin{proposition}\label{cor:AveragingOp_RBV_equivalence}
				Given $\pp \in LH(\Omega)$, for every function
                                $f$ we have 
				\begin{equation}\label{eq:AveragingOp_RBVpnorm_equivalence}
                                  \|f\|_{RBV^{p(\cdot)}(\Omega)}
                                  \approx \sup_{\D}\|G_{\D}f\|_{L^{p(\cdot)}(\Omega)},
				\end{equation}
                                where the supremum is taken over every countable
                                collection $\D$ of disjoint balls in
                                $\Omega$.  
                                The implicit constants depend only on $\pp$.
                              \end{proposition}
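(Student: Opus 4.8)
The plan is to observe that, for a \emph{fixed} disjoint collection $\D$, the quantity $\|f\|_{RBV_{\D}^{p(\cdot)}(\Omega)}$ is literally the Luxemburg norm of a suitable sequence in $\ell^{\D,p}$, and then to invoke Lemma~\ref{theo:sequence_Lp_norm_ineq} to turn that sequence norm into the $\Lp$-norm of $G_{\D}f$. The supremum over $\D$ is then carried through both sides at the end.

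First I would fix a countable collection $\D=\{B_k\}_{k=1}^\infty$ of disjoint balls in $\Omega$ and set $t_{B_k}:=\osc_{B_k}(f)/r_k\ge 0$, so that $G_{\D}f=\sum_k t_{B_k}\chi_{B_k}$. Since $\osc_{B_k}(f/\lambda)=\osc_{B_k}(f)/\lambda$ for $\lambda>0$, unwinding the definition~\eqref{eq:def_RBV_variable} gives
\[
V_{\D}^{p(\cdot)}(f/\lambda;\Omega)=\sum_k\Big(\tfrac{t_{B_k}}{\lambda}\Big)^{p_{B_k}}\|\chi_{B_k}\|_{p(\cdot)}^{p_{B_k}}=\sum_k\Big(\tfrac{t_{B_k}\|\chi_{B_k}\|_{p(\cdot)}}{\lambda}\Big)^{p_{B_k}},
\]
and hence, comparing with the definition of $\|\cdot\|_{\ell^{\D,p}}$,
\[
\|f\|_{RBV_{\D}^{p(\cdot)}(\Omega)}=\Big\|\textstyle\sum_k t_{B_k}\|\chi_{B_k}\|_{p(\cdot)}\,e_{B_k}\Big\|_{\ell^{\D,p}}.
\]
All infima are read with the convention $\inf\emptyset=+\infty$; this also disposes of the degenerate cases in which the defining series diverges for every $\lambda$ (both sides $=+\infty$) or $f$ is constant on each $B_k$ (both sides $=0$), and since $\osc_{B_k}(f)\ge 0$ the function $G_{\D}f$ is nonnegative and measurable, so no convergence issue arises.

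Next I would apply Lemma~\ref{theo:sequence_Lp_norm_ineq} to the real sequence $\{t_{B_k}\}_{B_k\in\D}$: it says precisely that the last displayed sequence norm is comparable, with constants depending only on $\pp$, to $\big\|\sum_k t_{B_k}\chi_{B_k}\big\|_{p(\cdot)}=\|G_{\D}f\|_{L^{p(\cdot)}(\Omega)}$. Combining this with the identity above gives $\|f\|_{RBV_{\D}^{p(\cdot)}(\Omega)}\approx\|G_{\D}f\|_{L^{p(\cdot)}(\Omega)}$ with implicit constants independent of $\D$; taking the supremum over all countable disjoint collections $\D$ then yields~\eqref{eq:AveragingOp_RBVpnorm_equivalence}.

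I do not anticipate a genuine obstacle here: the proposition is essentially a repackaging of Lemma~\ref{theo:sequence_Lp_norm_ineq} once one notices that the two Luxemburg constructions coincide after the substitution $t_{B_k}=\osc_{B_k}(f)/r_k$. The only point requiring care is that the equivalence in Lemma~\ref{theo:sequence_Lp_norm_ineq} holds with constants depending only on $\pp$ and not on $\D$ — which is exactly what its statement records — so that the supremum over $\D$ may legitimately be taken on both sides simultaneously.
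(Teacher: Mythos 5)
Your proposal is correct and follows essentially the same route as the paper: both identify $\|f\|_{RBV_{\D}^{p(\cdot)}(\Omega)}$ with the $\ell^{\D,p(\cdot)}$ Luxemburg norm of the sequence $\{(\osc_{B_k}(f)/r_k)\|\chi_{B_k}\|_{p(\cdot)}\}$, apply Lemma~\ref{theo:sequence_Lp_norm_ineq} to compare it with $\|G_{\D}f\|_{L^{p(\cdot)}(\Omega)}$ with constants depending only on $p(\cdot)$, and then take the supremum over $\D$. You merely spell out the unwinding of the two Luxemburg constructions (and the degenerate cases) that the paper leaves implicit.
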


                              \begin{proof}
Fix a collection $\D$. Then by Lemma~\ref{theo:sequence_Lp_norm_ineq},
the definition of $\ell^{\D,\pp}$, and the definition of $RBV_\D^\pp$,
\eqref{eq:def_RBV_variable}, we have that
\begin{multline*}
  \|G_\D f\|_\pp
  = \bigg\| \sum_{B_k \in \D} \frac{\osc_{B_k}(f)}{r_k} \chi_{B_k}
  \bigg\|_\pp \approx \\
  \approx
  \bigg\| \sum_{B_k \in \D} \frac{\osc_{B_k}(f)}{r_k}
  \|\chi_{B_k}\|_\pp
  \bigg\|_{\ell^{\D,\pp}}
  = \|f\|_{RBV_\D^\pp(\Omega)}.
  \end{multline*}
  If we take the supremum over every collection $\D$, we get
  \eqref{eq:AveragingOp_RBVpnorm_equivalence}.
 \end{proof}     
		
Finally, for our proof we need a version of Rubio de Francia
extrapolation into the scale of variable Lebesgue spaces.  More
precisely, we need a version of limited range extrapolation that was
proved in~\cite[Theorem 2.14]{Cruz-Wang_2017}.  This result is stated
in terms of an abstract family of extrapolation pairs $\F = \{
(f,g)\}$. 

 \begin{theorem}\label{theo:interpo_result}
  Given a family of extrapolation pairs $\F$,  let $1<q_{-}<q_+<\infty$ and assume that there exists ${p}$,
   $q_{-}<{p}<q_+,$ such that for every
   ${w}\in A_{{p}/q_-}\cap RH_{(q_+/{p})^{\prime}}$,
   \begin{equation}\label{eq:Extrapol_result2}
 \|f\|_{L^{{p}}({w})}\leqslant C\|g\|_{L^{{p}}({w})}
 \end{equation}
 for every pair $(f,g)\in \mathcal{F}$ such
 that $\|f\|_{L^{{p}}({w})}<\infty$. Given
 $p(\cdot)\in LH(\Omega)$, suppose $q_{-}<p_{-}\leqslant p_{+}<q_{+}$. Then for
 $(f,g)\in \mathcal{F}$ such that $\|f\|_{p(\cdot)}<\infty$,
 \begin{equation}\label{eq:Extrapol_result}
 \|f\|_{p(\cdot)}\leqslant C\|g\|_{p(\cdot)}.
 \end{equation}  
\end{theorem}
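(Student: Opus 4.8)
The plan is to recognize Theorem~\ref{theo:interpo_result} as a limited-range analogue of the Rubio de Francia extrapolation theorem in the variable Lebesgue scale, and to prove it by the now-standard combination of the classical (constant-exponent, weighted) limited-range extrapolation theorem with a duality plus Rubio de Francia iteration argument adapted to $\Lp(\Omega)$.

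First, I would make the obvious reductions. If $\|g\|_{\pp} = \infty$ then \eqref{eq:Extrapol_result} is trivial, so assume $\|g\|_{\pp} < \infty$; together with the hypothesis $\|f\|_{\pp} < \infty$ this keeps every quantity below finite. Next, replacing each pair $(f,g) \in \F$ by $(f^{q_-}, g^{q_-})$, the variable exponent $\pp$ by $s(\cdot) := \pp/q_-$, the fixed exponent $p$ by $p/q_-$, and $q_+$ by $q_+/q_-$, leaves the hypothesis \eqref{eq:Extrapol_result2} and the conclusion \eqref{eq:Extrapol_result} unchanged in form (after raising both sides to the power $q_-$) while moving the lower endpoint to $1$; here one uses that $s(\cdot) \in LH(\Omega)$ and $1 < s_- \le s_+ < q_+/q_-$ \emph{precisely because} $q_- < p_- \le p_+ < q_+$. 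Thus it suffices to treat the case $q_- = 1$, in which the weights in the hypothesis range over $A_{p} \cap RH_{(q_+/p)'}$.

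In that case, by the associate-space duality for variable Lebesgue spaces there is a non-negative $h$ with $\|h\|_{s'(\cdot)} \le 1$ and $\|f^{q_-}\|_{s(\cdot)} \lesssim \int_\Omega f^{q_-} h \dif x$ (this uses $s_- > 1$, so $s'_+ < \infty$). Since $s(\cdot) \in LH(\Omega)$ with $1 < s_- \le s_+ < \infty$, the maximal operator $M$ is bounded on both $L^{s(\cdot)}(\Omega)$ and $L^{s'(\cdot)}(\Omega)$; running the Rubio de Francia iteration with these two operator norms produces two $A_1$ weights, say $\Rf_1 h \ge h$ controlled in $L^{s'(\cdot)}$ and $\Rf_2 G$ dominating a fixed auxiliary function $G$ (e.g. $g^{q_-}$ suitably renormalized) and controlled in $L^{s(\cdot)}$. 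Taking a product $w$ of suitable fixed powers of $\Rf_1 h$ and $\Rf_2 G$ and invoking the Jones factorization of $A_{p}$ together with the Johnson–Neugebauer characterization of $A_{p}\cap RH_{(q_+/p)'}$, one obtains $w \in A_{p}\cap RH_{(q_+/p)'}$ with constants controlled by the $M$-operator norms alone, hence independently of $h$. Applying \eqref{eq:Extrapol_result2} to this $w$ and the pair $(f^{q_-},g^{q_-})$ — legitimate because $\|f^{q_-}\|_{L^{p}(w)}<\infty$ follows from $\|f^{q_-}\|_{s(\cdot)}<\infty$ by Hölder's inequality — and then running Hölder's inequality in the variable scale back through $\Rf_1 h \ge h$ and $\|\Rf_1 h\|_{s'(\cdot)} \lesssim 1$, converts $\|f^{q_-}\|_{L^{p}(w)} \lesssim \|g^{q_-}\|_{L^{p}(w)}$ into $\int_\Omega f^{q_-} h \dif x \lesssim \|g^{q_-}\|_{s(\cdot)}$; taking the supremum over admissible $h$ yields $\|f\|_{\pp}^{q_-} = \|f^{q_-}\|_{s(\cdot)} \lesssim \|g^{q_-}\|_{s(\cdot)} = \|g\|_{\pp}^{q_-}$, which is \eqref{eq:Extrapol_result}.

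The main obstacle is the weight construction in the third step: one must divide the roles of $h$ and the auxiliary function between the two Rubio de Francia algorithms and pin down the powers so that $w$ lands \emph{exactly} in $A_{p}\cap RH_{(q_+/p)'}$ for the fixed $p$ of the hypothesis (this part in effect re-derives the constant-exponent limited-range extrapolation of Auscher–Martell, which one could alternatively invoke as a black box and then only carry out the shorter variable-exponent passage), while simultaneously keeping $w$ large enough relative to $h$ for the closing Hölder estimate and tracking the finiteness conditions so that \eqref{eq:Extrapol_result2} is legitimately applicable to the pair at hand. Everything else is bookkeeping; the one genuinely structural input is that $\pp \in LH(\Omega)$ together with the strict inequalities $q_- < p_- \le p_+ < q_+$ forces $1 < s_- \le s_+ < \infty$, which is exactly what makes $M$ bounded on $L^{s(\cdot)}$ and $L^{s'(\cdot)}$ and so powers the iteration.
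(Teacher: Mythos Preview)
The paper does not prove Theorem~\ref{theo:interpo_result} at all: it is quoted verbatim as \cite[Theorem~2.14]{Cruz-Wang_2017} and used as a black box in the proof of Theorem~\ref{theo:Riesz_Variable-Exponent}. So there is no ``paper's own proof'' to compare against; your proposal is in fact a sketch of how the cited reference establishes the result.

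That said, your outline is the right one and matches the architecture of the Cruz-Uribe--Wang argument: rescale by $q_-$ to reduce to the lower endpoint $1$, dualize in $L^{s(\cdot)}$ with $s(\cdot)=\pp/q_-$, run two Rubio de Francia iterations (possible because $\pp\in LH$ and the strict inequalities $q_-<p_-\le p_+<q_+$ force $1<s_-\le s_+<\infty$, hence $M$ is bounded on both $L^{s(\cdot)}$ and $L^{s'(\cdot)}$), and assemble a weight in $A_{p/q_-}\cap RH_{(q_+/p)'}$ via factorization. The one place where your sketch is genuinely thin is the third step: as written, the hypothesis gives an $L^{\tilde p}(w)$ estimate for a \emph{single} $\tilde p$, while your duality produces $\int F\,h\,\dif x$, an $L^1$-type pairing; bridging these requires either (a) first invoking the constant-exponent limited-range extrapolation of Auscher--Martell so that the weighted inequality is available at \emph{every} exponent in $(q_-,q_+)$, and then choosing one adapted to $s(\cdot)$, or (b) a careful direct construction in which the exponent at which you test and the powers in the weight are chosen simultaneously. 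You flag exactly this as ``the main obstacle,'' which is accurate; in practice route~(a) is what \cite{Cruz-Wang_2017} does, and it is the cleaner way to fill the gap.
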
   
			
\begin{remark}
  We want to emphasize that the family of extrapolation pairs $\F$
  must be chosen so that the left-hand side of
  ~\eqref{eq:Extrapol_result2} and~\eqref{eq:Extrapol_result} are
  finite.  Given this assumption, to prove a desired inequality in general often requires an
  approximation argument; this is the case in our proof below.  On the
  other hand, we do not need to assume that the right-hand side of
  either of these inequalities is finite.
 For a complete discussion of this approach to
extrapolation, see~\cite{Cruz-Martell2011,Cruz-Wang_2017}.  
\end{remark}

\begin{proof}[Proof of Theorem \ref{theo:Riesz_Variable-Exponent}]
First assume that $f\in W^{1,\pp}(\Omega)$.  Let $B\subset \Omega$ be
any ball.  Then by the embedding of variable Lebesgue
spaces on compact domains~\cite[Corollary~2.48]{cruz-fiorenza-book},
\[ W^{1,\pp}(\Omega) \subset W^{1,\pp}(B) \subset W^{1,p_-}(B).  \]
Since $p_->n$, by~\cite[Theorem~1.1]{Barza-Lind2015} the function $f$ has a
continuous representative on $B$.  Since this is true for all balls
contained in $\Omega$, $f$ has a continuous representative on
$\Omega$. 

We will now prove that the right-hand side inequality in~\eqref{eq:
  var-exponent_Riesz} holds:  i.e., that $\|f\|_{RBV^\pp(\Omega)}
  \lesssim \|\nabla f\|_{L^\pp(\Omega)}$.  Fix a
collection $\D=\{B_{k}\}_{k=1}^\infty$ of disjoint balls contained in
$\Omega$. Since the balls are disjoint, given any weight $w$, we obtain
\begin{equation} \label{eqn:G-norm}
  \begin{split}
\|G_{\D}f\|_{L^{p}(w,\Omega)}^{p}
   &=\int_{\Omega}(G_{\D}f(x))^{p}w(x)\dif{x} \\
   &= \sum_{B_k\in\D}\left(\frac{\osc_{B_k}(f)}{r_k}\right)^{p}w(B_k)\\
    &= V_p(f;\Omega,w)^p.
    \end{split}
 \end{equation}
 To apply extrapolation we need that the left-hand side
 of~\eqref{eq:Extrapol_result2} and~\eqref{eq:Extrapol_result} are
 finite.  To ensure this, we define a truncation of the operator
 $G_\D$.  For each $N\geq 1$, define
 $\D_{N}=\{B_{k} : B_{k}\in \D, 1\leqslant k \leqslant N\}$ and the
 truncated operator $G_{D_{N}}f$.  Define the family of extrapolation pairs
 \begin{equation*}
   \mathcal{F}=
   \left\{ (G_{\D_N}f ,\nabla f) :   f\in  C^{\infty}(\Omega)\cap
     W^{1,p(\cdot)}(\Omega),\,
     N\geqslant 1 \right\}.
 \end{equation*}
Fix $\pp\in LH(\Omega)$ such that $n<p_- \leq p_+<\infty$.  Let
$q_-=n$, and fix $q_+>p_+$.  Let $w$ be any weight in $A_{p_-/n}\cap
RH_{(q_+/p_-)'}$  (note that this class is not empty: if
we take $w\in A_{p_-/n}$, by Lemma~\ref{lemma:Ainfty_RHS} we have
$w\in RH_{(q_+/p_-)'}$ for all $q_+$ sufficiently large).  Since $w\in
A_{p_-/n}$, we have that $p_->nr_w$ and $w\in A_{p_-}$.  Fix $f\in C^{\infty}(\Omega)\cap
     W^{1,p(\cdot)}(\Omega)$.  For all $N$,
$G_{\D_N}f$ is compactly supported and in $L^\infty$ with $\|G_{\D_N}\|_{L^{p_-}(w)}
<\infty$, since $w$ is locally integrable.  Thus, by
Theorem~\ref{theo:RBVpq-W1p} and~\eqref{eqn:G-norm}
\[ \|G_{\D_N}f\|_{L^{p_{-}}(\Omega,w)}
  \leqslant C \|\nabla f\|_{L^{p_{-}}(\Omega,w)}. \]
Similarly, we have that
\[ \|G_{\D_N}f\|_\pp \leq \|G_{\D_N}f\|_\infty
  \|\supp(G_{\D_N}f)\|_\pp < \infty.  \]
Therefore, by Theorem~\ref{theo:interpo_result} applied to the family
$\F$, for every $N\geq 1$ and $f\in C^{\infty}(\Omega)\cap
     W^{1,p(\cdot)}(\Omega)$,
\begin{equation*}
  \|G_{\D_N}f\|_{L^{p(\cdot)}(\Omega)}
  \leqslant C \|\nabla f\|_{L^{p(\cdot)}(\Omega)}.
\end{equation*}
If we apply  Fatou's lemma in the variable Lebegue spaces (see
\cite[Theorem 2.61]{cruz-fiorenza-book}) we have  that for all $f\in
C^{\infty}(\Omega)\cap W^{1,p(\cdot)}(\Omega)$,
 \begin{equation} \label{eqn:dense-set}
   \| G_{\D}f\|_{L^{p(\cdot)}(\Omega)}
   \leqslant C \|\nabla f\|_{L^{p(\cdot)}(\Omega)}.
 \end{equation}

 To complete the proof, fix $f\in W^{1,p(\cdot)}(\Omega)$.  Recall
 that $C^{\infty}(\Omega)\cap W^{1,p(\cdot)}(\Omega)$ is dense in
 $W^{1,p(\cdot)}(\Omega)$ (see \cite[Theorem
 6.14]{cruz-fiorenza-book}).  Fix  a sequence $\{f_{k}\}_{k=1}^\infty$
 of functions in $C^{\infty}(\Omega)\cap W^{1,p(\cdot)}(\Omega)$ that converges
 to $f$ in the $W^{1,p(\cdot)}$ norm. Then we can find a subsequence (still denoted by $f_k$)  
 which converges pointwise to $f$
 (see \cite[Theorem 2.67]{cruz-fiorenza-book}).  Therefore, by Fatou's
 lemma in the variable Lebesgue spaces and by
 inequality~\eqref{eqn:dense-set},
 \begin{equation*}
   \|G_{\D}f\|_{L^{p(\cdot)}(\Omega)}
   \leqslant  \liminf_{k\rightarrow \infty}\|G_{\D}f_k\|_{L^{p(\cdot)}(\Omega)}
   \lesssim \lim_{k\rightarrow\infty} \|\nabla f_k\|_{L^{p(\cdot)}(\Omega)}
   = \|\nabla f\|_{L^{p(\cdot)}(\Omega)}.
 \end{equation*}
 The desired inequality now follows from Proposition~\ref{cor:AveragingOp_RBV_equivalence}.             

\medskip			

We will now prove the converse.  Fix $f\in RBV^\pp(\Omega) \cap
L^\pp(\Omega)$.  We will prove that  $f\in W^{1,\pp}(\Omega)$ and the left-hand side inequality in ~\eqref{eq:
  var-exponent_Riesz} holds:  i.e., that $\|\nabla f\|_{L^\pp(\Omega)}
  \lesssim \|f\|_{RBV^\pp(\Omega)}$. 

  Let $\Omega_0\Subset  \Omega$; then we have the embedding
 \begin{equation*}
   RBV^{p(\cdot)}(\Omega)\subset  RBV^{p(\cdot)}\left(\Omega_0\right)
   \subset RBV^{p_{-}}\left(\Omega_0\right).
 \end{equation*}
 The first inclusion is immediate; the second one does not follow
 directly from the definition of $RBV^\pp$, but is a consequence of
 Corollary~\ref{cor:AveragingOp_RBV_equivalence} and the embedding of
 variable Lebesgue spaces on compact
 domains~\cite[Corollary~2.48]{cruz-fiorenza-book}.  Since $p_->n$,
 again by  \cite[Theorem
 1.1]{Barza-Lind2015} we have that
 $RBV^{p(\cdot)}(\Omega)\subset
 W^{1,p_{-}}\left(\Omega_0\right)$. Thus, weak derivatives are well defined in
 $RBV^{p(\cdot)}(\Omega)$.

 We now define  the family of extrapolation pairs
 \[
   \mathcal{F}=\big \{ (\langle\nabla f\rangle_{N}, G_{\D}f ) : f\in
     RBV^{p(\cdot)}(\Omega),\ N\geqslant 1, \D \big\},
 \] 
 where $\langle\nabla f\rangle_{N}=\min\{N,|\nabla f|\}$ and
 $\mathcal{D}$ is a countable family of disjoint
 balls contained in $\Omega$ that depends on $f$ and $N$; the exact
 choice of $\D$ will be made below.  Because $w$ is locally integrable, 
 $\| \langle\nabla f\rangle_{N} \|_{L^{p_-}(\Omega,w)}<\infty$;
 similarly, since $p_+<\infty$, $\| \langle\nabla f\rangle_{N}
 \|_{\Lp(\Omega)}<\infty$. 
We now argue as we did in the proof above, fixing $q_-,\,q_+$, and 
$w\in A_{\frac{p_{-}}{n}}\cap RH_{(\frac{q_{+}}{p_{-}})^{\prime}}$.
If $f\in RBV^{p_-}(\Omega,w)$, then by Theorem~\ref{theo:RBVpq-W1p}
and inequality~\eqref{eqn:G-norm}
\begin{equation} \label{eqn:saturate}
\| \langle\nabla f\rangle_{N} \|_{L^{p_-}(\Omega,w)}
  \leq \| \nabla f \|_{L^{p_-}(\Omega,w)}
  \lesssim V^{p_-}(f,\Omega,w)
  \leq 2\|G_\D f\|_{L^{p_-}(\Omega,w)}, 
\end{equation}
where we choose $\D$ to saturate the supremum used to define
$V^{p_-}$.  On the other hand, if $f\not\in RBV^{p_-}(\Omega,w)$, then
$V^{p_-}(f,\Omega,w)=\infty$, so we can choose $\D$ so that
$\|G_\D f\|_{L^{p_-}(\Omega,w)}$ is either infinite or arbitrarily large; in particular, we can fix $\D$ so
that
$\| \langle\nabla f\rangle_{N} \|_{L^{p_-}(\Omega,w)} \lesssim \|G_\D
f\|_{L^{p_-}(\Omega,w)}$ with the same constant as
in~\eqref{eqn:saturate}.  Since the hypotheses of
Theorem~\ref{theo:interpo_result} are
satisfied for any pair  $(\langle\nabla f\rangle_{N}, G_{\D}f )$ in
$\mathcal{F}$, we conclude that
 \begin{equation*}
   \|\langle\nabla f\rangle_{N} \|_{L^{p(\cdot)}(\Omega)}
   \lesssim \| G_{\D} f\|_{L^{p(\cdot)}(\Omega)}
   \lesssim \|f\|_{RBV^\pp(\Omega)};
 \end{equation*}
 the last inequality holds by
 Corollary~\ref{cor:AveragingOp_RBV_equivalence}. 
 If we again apply  Fatou's lemma in the variable Lebesgue spaces, we get
 \[
   \|\nabla f\|_{L^{p(\cdot)}(\Omega)}
   \leqslant \liminf_{N\rightarrow\infty} \|\langle \nabla
   f\rangle_{N}\|_{L^{p(\cdot)}(\Omega)}
   \lesssim  \|f\|_{RBV^{p(\cdot)}(\Omega)},
 \]      
which completes the proof.
\end{proof}


\begin{remark}
  Though our goal was to prove
  Theorem~\ref{theo:Riesz_Variable-Exponent} via extrapolation, we
  want to note that we can also prove the second half of this result
  by adapting the proof of Theorem \refeq{theo:RBVpq-W1p} to the
  variable exponent setting. Here we sketch the proof.

  We will follow the same construction and notation from the proof of
  Theorem \refeq{theo:RBVpq-W1p}. First, by homogeneity of the norm
  and by Corollary \refeq{cor:AveragingOp_RBV_equivalence}, without
  loss of generality, we may assume that
  $\|f\|_{RBV^{p(\cdot)}(\Omega)}=c_0$, for some constant that is
  sufficiently small that
  $\|G_{\D} f\|_{L^{p(\cdot)}(\Omega)}\leqslant 1$ for any collection
  of balls $\D$.  Therefore, by~\cite[Corollary
  2.22]{cruz-fiorenza-book},
  $\rho_{p(\cdot),\Omega}\left(G_{ \D}f\right)\leqslant 1$.  Since
  $p_{+}<\infty$ and $\mathcal{B}_{i}$ is a disjoint collection of
  balls, we get the estimate
  \begin{align*}
    \rho_{p(\cdot),\Omega_0} \left( D_{j} (\varphi_{R}*f)\right)
    &\leqslant \sum_{k\in K}\int_{Q_{k}}|D_{j}\varphi_{R}*f(x)|^{p(x)}\dif{x}\\
    & = \sum_{k\in K}\int_{Q_{k}}
      \abs{\int_{B(0,R)}D_{j}\varphi_{R}(y)(f(x-y)-f(x))\dif{y}}^{p(x)}\dif{x}\\
    &\lesssim \sum_{k\in K}\int_{Q_{k}}
      R^{-(n+1)p(x)}(\osc_{B_{k}}(f))^{p(x)}|B(0,R)|^{p(x)}\dif{x}\\
    &\lesssim \sum_{k\in K}\int_{B_k}
      \left(\frac{\osc_{B_k}(f)}{3\sqrt{n}R}\right)^{p(x)}\dif{x} \\
 & = \sum_{i=1}^{N}\sum_{B_{k}\in \B_i}\int_{B_k}\left(\frac{\osc_{B_k}(f)}{3\sqrt{n}R}\right)^{p(x)}\dif{x}\\
 &=
   \sum_{i=1}^{N}\int_{\Omega}\left(G_{\mathcal{B}_{i}}f\right)^{p(x)}\dif{x}
    \\
    & \leqslant
      \sum_{i=1}^{N}\rho_{p(\cdot),\Omega}\left(G_{\mathcal{B}_{i}}f\right)
    \\
    & \leqslant N.
 \end{align*}
 So,
 $\|D_{j} (\varphi_{R}*f)\|_{L^{p(\cdot)}(\Omega_{0})}$ is uniformly bounded.
 Since $\pp \in LH(\Omega)$, the Hardy-Littlewood maximal operator is
 bounded on $L^\pp(\Omega)$, so
 $\|\varphi_{R}*f\|_{L^{p(\cdot)}(\Omega_{0})}$ is bounded.  
 Hence, the sequence $\{\varphi_{R}*f\}_{R>0}$ is uniformly bounded in
 $W^{1,p(\cdot)}(\Omega_{0})$, and $\{\varphi_{R}*f\}_{R>0}$ converges
 pointwise to $f$.  We can adapt the proof
 of~\cite[Theorem~1.32]{Heinonen1993} to the variable exponent setting
 (again using the fact that $\pp \in LH(\Omega)$ and $p_+<\infty$) to
 conclude that $f\in W^{1,p(\cdot)}(\Omega_{0})$ for every
 $\Omega_{0}\Subset  \Omega$.  Then by Fatou's lemma, we get
 that $f\in W^{1,p(\cdot)}(\Omega)$.
 \end{remark}

 \begin{remark}
   Given the previous remark, it would be interesting to have a direct
   proof of the first half of
   Theorem~\ref{theo:Riesz_Variable-Exponent} that did not depend on
   extrapolation.  We conjecture that such a proof is possible, though
   we have not been able to find it.  A starting point would be the
   variants of the Morrey inquality that hold for variable Sobolev
   spaces, see, for example,~\cite[Theorem 6.36]{cruz-fiorenza-book}.
 \end{remark}

 \section*{Acknowledgments}
 The first author is partially supported by a Simons Foundation Travel
 Support for Mathematicians Grant. The second and third authors were
 supported by a Research Start-up Grant of United Arab Emirates
 University, UAE, via Grant G00002994.

			\bibliographystyle{plain}

			\bibliography{Weighted-RBVp_R_n}

                        \end{document}